\theoremstyle{plain}
\newtheorem{theorem}{Theorem}[section]
\newtheorem{proposition}[theorem]{Proposition}
\newtheorem{lemma}[theorem]{Lemma}
\newtheorem{corollary}[theorem]{Corollary}
\theoremstyle{definition}
\newtheorem{definition}[theorem]{Definition}
\newtheorem{example}[theorem]{Example}
\newtheorem{examples}[theorem]{Examples}
\theoremstyle{remark}
\newtheorem{remark}[theorem]{Remark}
\noindent\makebox[0mm][r]{\rm(\arabic{enumi})}}
\def\opn#1#2{\def#1{\operatorname{#2}}} 
\opn\Ap{Ap}
\opn\ord{ord}
\opn\Max{Max}
\opn\gr{gr}
\opn{\PF}{PF}
\opn{\tr}{tr}
\newcommand{\du}{S \! \Join^b \! E}
\definecolor{mygreen}{cmyk}{0.7,0.2,1,0}
\def\ZZ{{\mathbb Z}}
\def\fm{{\mathfrak m}}
\def\fn{{\mathfrak n}}
\newcommand{\excise}[1]{}
\begin{document}
	
	\title{ Tangent cones of monomial curves obtained by numerical duplication  }
	
	\author{Marco D'Anna}
	\address{Marco D'Anna - Dipartimento di Matematica e Informatica - Universit\`a degli Studi di Catania - Viale Andrea Doria 6 - 95125 Catania - Italy}
	\email{mdanna@dmi.unict.it}
	
	\author{Raheleh Jafari}
	
\address{Raheleh Jafari - Mosaheb Institute of Mathematics, Kharazmi University, and 	
		School of Mathematics, Institute for
		Research in Fundamental Sciences (IPM), P. O. Box 19395-5746,
		Tehran, Iran. }

	\email{rjafari@ipm.ir}

	\author{Francesco Strazzanti}
	\address{Francesco Strazzanti - Institut de Matem\`{a}tica - Universitat de Barcelona - Gran Via de les Corts Catalanes 585 - 08007 Barcelona - Spain}
	\email{francesco.strazzanti@gmail.com}

\thanks{The second author was in part supported by a grant from IPM (No. 96130112)}

	\thanks{The third author was partially supported by INdAM, MTM2013-46231-P and MTM2016-75027-P (Ministerio de Economı\'ia y Competitividad), and FEDER}

	\subjclass[2010]{13A30, 13H10, 20M14, 20M25}

		\begin{abstract}
		Given a numerical semigroup ring $R=k[\![S]\!]$, an ideal $E$ of $S$ and  an odd element $b \in S$, 
		the numerical duplication $\du$ is a numerical semigroup, whose associated ring $k[\![\du]\!]$
		shares many properties with the Nagata's idealization and the amalgamated duplication of $R$ 
		along the monomial ideal $I=(t^e \mid e\in E)$. In this paper we study the associated graded ring of the numerical duplication characterizing when it is Cohen-Macaulay, Gorenstein or complete intersection. We also study when it is a homogeneous numerical semigroup, a property that is related to the fact that a ring has the same Betti numbers of its associated graded ring. On the way we also characterize when $\gr_\fm(I)$ is Cohen-Macaulay and when $\gr_\fm(\omega_R)$ is a canonical module of $\gr_\fm(R)$ in terms of numerical semigroup's properties, where $\omega_R$ is a canonical module of $R$.
	\end{abstract}

	\keywords{Numerical semigroups, numerical duplication, associated graded ring, Cohen-Macaulay rings, Gorenstein rings, homogeneous numerical semigroups}
	\date{\today}
	
	\maketitle
	

\section*{Introduction}	

Let $(R, \mathfrak{m})$ be a commutative local ring. The study of the properties of its associated graded ring
$\gr_{\mathfrak m}(R)$ in connection with the properties of the original ring $R$ is a very difficult and interesting problem
in local algebra and it has been studied from many points of view, in the general case or for particular kind of rings.
One motivation for this study is the fact that in the geometrical context $\gr_\fm(R)$ corresponds to the 
tangent cone of the variety associated with $R$. 

A class for which this problem has been deeply studied is the class of numerical semigroup rings, i.e. rings of the form $k[\![S]\!]=k[\![t^s \mid s \in S$]\!], where $k$ is a field, $t$ an indeterminate and $S\subseteq \mathbb N$ an additive submonoid of the non-negative integers, with finite complement in $\mathbb N$.
These rings can be viewed as the completion of the coordinate ring of a monomial curve at the 
origin (i.e. its singular point). This means that the associated graded ring corresponds to the tangent cone 
at the origin of this curve. 

The aim of this paper is to study the tangent cone of monomial curves defined by numerical duplication,
finding numerical condition to determine their properties. Numerical duplication is a semigroup construction introduced in \cite{DS};
starting with a numerical semigroup $S$, a semigroup ideal $E$ and an odd element $b$ of $S$, it allows to construct a new semigroup 
denoted by $\du$, whose properties depend on $E$ and are often independent of $b$.
For example, if $E$ is a canonical ideal of $S$, the semigroup $\du$ is always symmetric, i.e. $k[\![\du]\!]$ is always Gorenstein.

In \cite{BDS}, the numerical duplication is connected to a more general ring construction. Let $R$ be a commutative ring, $I$ be an ideal of $R$ and $u,v \in R$;
if $\mathcal R_+(I)=\bigoplus_{i\geq 0} I^iT^i \subset R[T]$ denotes the corresponding Rees algebra, we set $R(I)_{v,u} = \mathcal R_+(I)/((T^2+vT+u)\cap \mathcal R_+(I))$. In particular, in \cite{BDS} it is proved that if $R=k[\![S]\!]$, $I=(t^{e_1}, \dots, t^{e_r})$ and $u=t^b$,
then $R(I)_{0,-u} \cong k[\![\du]\!]$, where $E=\{e_1,\ldots,e_r\}+S$ is the ideal of $S$ generated by $e_1, \dots, e_r$.

One interesting fact about this family of rings is that, as particular cases, we obtain other constructions such as the
Nagata's idealization (also called trivial extension; take $u=v=0$) or the amalgamated duplication 
(see \cite{D} and \cite{DF}; take $v=-1,\ u=0$).
Moreover, many relevant properties, such as Cohen-Macaulayness and Gorensteinnes are shared by all the 
rings in the family, independently of the choice of $u$ and $v$, see also \cite{BDS2}. Hence, results obtained in a particular case (like the case of numerical semigroup rings) give information on other kind of rings. For instance, this construction allowed the authors of \cite{OST} to find infinitely many one-dimensional Gorenstein rings with decreasing Hilbert function, starting from the particular case of numerical duplication.

In this paper we give numerical conditions that characterize when the tangent cone of $k[\![\du]\!]$ is Cohen-Macaulay, Gorenstein or a complete
intersection and we will see that these properties depend only on $E$ and not on $b$. Moreover, as a byproduct, we describe when the canonical module of $\gr_{\mathfrak m}(k[\![S]\!])$ has the expected form, i.e. when it is the associated graded module of the canonical module of $k[\![S]\!]$.
Subsequently, we study the homogeneous property for $\du$; this property has been introduced in \cite{JZ} and it is strictly connected to the homogeneous type property, i.e. the fact that $k[\![S]\!]$ and $\gr_\fm(k[\![S]\!])$ share the same Betti numbers. More precisely, in \cite{JZ} it is proved that, if $S$ has at most three generators, then $S$ is of homogeneous type if and only if either $S$ is homogeneous and $\gr_{\mathfrak m}(k[\![S]\!])$ is Cohen-Macaulay or $\gr_{\mathfrak m}(k[\![S]\!])$ is a complete intersection.
In the case of numerical duplication we characterize the homogeneous property and show that it depends also on $b$. Moreover, using this fact, we are able to construct semigroups that are of homogeneous type but not homogeneous and  their associated graded ring is not complete intersection, giving an answer to \cite[Question 4.22]{JZ}.

\medskip

The structure of the paper is the following. In Section 1, we fix the notation and prove some results about the associated graded ring of $R(I)_{v,u}$, in particular, Corollary \ref{CM-Gor} relates  the Cohen-Macaulayness of $\gr_\fm(k[\![\du]\!])$ to the Cohen-Macaulayness of $\gr_\fm(k[\![S]\!])$ and $\gr_\fm(I)$. For this reason we focus the subsequent section on the Cohen-Macaulayness of $\gr_\fm(I)$, listing several equivalent conditions in Proposition \ref{ECM}. In Section 3, we characterize the Gorenstein and complete intersection properties of $\gr_\fm(k[\![\du]\!])$, see Theorem \ref{Gorenstein} and Proposition \ref{complete intersection}; moreover, in Corollary \ref{canonical module} we determine when the canonical module of $\gr_\fm(k[\![S]\!])$ has the expected form, provided that it is Cohen-Macaulay. In the last section we characterize when $\du$ is homogeneous and we construct numerical semigroups that are of homogeneous type but not homogeneous and their associated graded rings are not complete intersection, see Theorem \ref{homogeneous duplication} and Example \ref{example}.1.

Several computations are performed by using the GAP system \cite{GAP} and, in particular, the NumericalSgps package \cite{DGM}.


\section{Preliminaries, idealization and tangent cone of duplication}

A {\it numerical semigroup} $S$ is a submonoid of $(\mathbb N, +)$ such that $\mathbb{N} \setminus S$ is finite. It is well known that $S$ is finitely generated and has a unique minimal system of generators. Throughout the whole paper,   $S=\langle n_1, \dots, n_\nu \rangle$ is a numerical semigroup minimally generated by $n_1<\cdots<n_\nu$ and   $R=k[\![S]\!]=k[\![t^{n_1}, \dots, t^{n_{\nu}}]\!]$ is the corresponding numerical semigroup ring with  maximal ideal  $\mathfrak m=(t^{n_1}, \dots, t^{n_{\nu}})$. The smallest nonzero element of $S$, $n_1$, is called the {\it multiplicity } of $S$ and is denoted by $m$; it is well known that $m=e(R)$, the multiplicity of $R$. 
A {\it relative ideal} of $S$ is a non-empty set $E$ of integers such that $E+S\subseteq E$ and $s+E\subseteq S$ for some $s\in S$. A relative ideal of $S$ that  is contained in $S$ is called an {\it ideal} of $S$.  We always assume  that an ideal does not contain $0$, i.e. $E \neq S$. Note that for relative ideals $E_1$ and $E_2$  of $S$, the set $E_1+E_2=\{e_1+e_2 \mid e_1\in E_1 , e_2\in E_2\}$  is also a relative ideal. In particular, for $z\in \ZZ$, $z+S=\{z+s \mid s\in S\}$ is the principal relative ideal of $S$ generated by $z$. 
For any ideal $E$  of $S$, we can always express it as $E=(e_1+S) \cup \dots \cup (e_r+S)$, for some $e_i \in E$;
then, we will write $E=\{e_1, \dots, e_r\}+S$ and we can always assume that the set $\{e_1,\dots,e_r\}$ is minimal,
i.e., for all $i=1,\dots,r$, $e_i \notin \bigcup_{j\neq i}(e_j+S)$. It is straightforward to see that $E$ has a unique minimal set of generators.
By difference of two ideals $E_1$ and $E_2$, we mean the ideal $E_1-E_2=\{z\in\ZZ \mid z+E_2\subseteq E_1\}$. 
We denote by $M=S \setminus \{0\}$  the maximal ideal of $S$
and we set $lM=M+ \dots + M$.
The {\it blowup} of $S$ is defined as the numerical semigroup 
$$
S'= \bigcup_l \ (lM - lM) = \langle m, n_2 - m, \dots, n_\nu - m \rangle.
$$
It is well known that $S'=lM-lM=lM-lm$ for $l$ large enough (cf. \cite[Proposition~1.1]{L}).

Let $\omega_i=\min \{s \in S \mid  s \equiv i \ (\text{mod} \ m)\}$. The {\it Ap\'ery set} of $S$ with respect to $m$ is the set $\Ap_{m}(S)=\{\omega_0=0, \omega_1, \dots, \omega_{m-1}\}=\{s\in S \mid s-m\notin S\}$. In the same way we denote $\Ap_{m}(S')=\{\omega'_0=0, \omega'_1, \dots, \omega'_{m-1}\}$. It follows from the definition that $\omega_i \geq \omega'_i$ for all $i=0, \ldots, m-1$ and we define the {\it microinvariants} of $S$ as the integers $a_i(S)$ such that $\omega'_i + m \, a_i(S)=\omega_i$.
Moreover, we set $b_i(S)=\max\{l \mid \omega_i \in lM\}$.
A criterion for the Cohen-Macaulayness of the associated graded ring, proved by Barucci and Fr\"{o}berg in \cite[Theorem 2.6]{BF}, implies the following theorem:

\begin{theorem} \label{BaFr}
The graded ring $\gr_{\mathfrak m}(R)$ is Cohen-Macaulay if and only if $a_i(S)=b_i(S)$ for each $i=0, \ldots, m-1$.
\end{theorem}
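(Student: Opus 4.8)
The plan is to reduce the statement to a condition on the order function of $S$ and then to analyse it residue class by residue class. Write $\ord(s)=\max\{l\mid s\in lM\}$ for $s\in S$, so that $\ord(t^s)$ is the degree of the leading form $(t^s)^*$ in $\gr_{\mathfrak m}(R)$ and $\ord(\omega_i)=b_i(S)$ by definition. Since $m=e(R)$, the element $t^m$ generates a minimal reduction of $\mathfrak m$, so $\gr_{\mathfrak m}(R)$ is a one-dimensional standard graded $k$-algebra and $(t^m)^*$ is a homogeneous parameter for it. The first thing I would record is the standard fact that $\gr_{\mathfrak m}(R)$ is Cohen--Macaulay if and only if this parameter $(t^m)^*$ is a nonzerodivisor. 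Because multiplication by $(t^m)^*$ sends the monomial basis element $(t^s)^*$ either to $(t^{s+m})^*$, when $\ord(s+m)=\ord(s)+1$, or to $0$ otherwise, being a nonzerodivisor is equivalent to the purely numerical condition
\[
\ord(s+m)=\ord(s)+1\qquad\text{for every }s\in S.
\]

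Next I would organise $S$ into residue classes modulo $m$: every $s\in S$ is uniquely $\omega_i+km$ with $0\le i\le m-1$ and $k\ge 0$, and the displayed condition, restricted to the class of $i$, reads $o_{k+1}=o_k+1$ for all $k\ge 0$, where $o_k:=\ord(\omega_i+km)$. Since $m\in M$ one always has $o_{k+1}\ge o_k+1$, so the sequence $o_k-k$ is nondecreasing; its initial value is $o_0=\ord(\omega_i)=b_i(S)$. Thus within the class of $i$ the condition holds for all $k$ if and only if $o_k-k$ is constant, and to finish I must identify its eventual value.

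The heart of the argument, and the step I expect to be the main obstacle, is to show that this eventual value is exactly $a_i(S)$, i.e. that $\ord(\omega_i+km)=k+a_i(S)$ for $k\gg 0$. Using $\omega_i=\omega'_i+m\,a_i(S)$ this amounts to $\ord(\omega'_i+Nm)=N$ for $N\gg 0$. From the stated description of the blowup, $S'=lM-lm$ for $l$ large, one extracts $lM=lm+S'$ for all large $l$; since the sets $lM$ are nested one has $\ord(x)\ge l\iff x\in lM$, so for $N$ large $\ord(\omega'_i+Nm)\ge N\iff \omega'_i\in S'$ (which is true) while $\ord(\omega'_i+Nm)\ge N+1\iff \omega'_i-m\in S'$ (which is false, as $\omega'_i\in\Ap_m(S')$). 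This pins the eventual value of $o_k-k$ to $a_i(S)$ and shows in particular $b_i(S)\le a_i(S)$. Combining the three steps, the class of $i$ produces no obstruction to injectivity precisely when $b_i(S)=a_i(S)$, so $(t^m)^*$ is a nonzerodivisor --- equivalently $\gr_{\mathfrak m}(R)$ is Cohen--Macaulay --- if and only if $a_i(S)=b_i(S)$ for every $i$.
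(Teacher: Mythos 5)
Your proof is correct, and it reaches the theorem by a genuinely different route than the paper does. The paper itself never proves Theorem~\ref{BaFr} directly: it quotes Barucci--Fr\"oberg, and only later re-derives the statement (Corollary~\ref{CM2}) as the special case $E=M$ of Proposition~\ref{ECM} together with Lemma~\ref{ord E}. Your argument shares the common skeleton --- everything is funneled through ``$\gr_{\fm}(R)$ Cohen--Macaulay $\iff$ $(t^m)^*$ is a nonzerodivisor $\iff$ $\ord_S(s+m)=\ord_S(s)+1$ for all $s\in S$'' --- but both pivotal steps are justified by different means. Where the paper proves that Cohen--Macaulayness forces $(t^m)^*$ to be a nonzerodivisor by stripping a general nonzerodivisor down to $(t^m)^*$ via Garcia's nilpotency lemma (Lemma~\ref{Gl1}(2)), you instead note that $t^m$ generates a minimal reduction of $\fm$, so $(t^m)^*$ is a homogeneous parameter, and parameters in Cohen--Macaulay rings are nonzerodivisors; this trades a semigroup-specific citation for standard commutative algebra. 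Where the paper's Lemma~\ref{ord E} establishes the equivalence with $a_i(S)=b_i(S)$ through a cycle of implications ending in a contradiction inside the Ap\'ery set of the blown-up ideal, you observe that $k\mapsto \ord_S(\omega_i+km)-k$ is nondecreasing, starts at $b_i(S)$, and stabilizes at $a_i(S)$ (via Lipman's $lM=lm+S'$ for $l\gg 0$), so that the order condition on the class of $i$ holds precisely when $a_i(S)=b_i(S)$; this treats both directions simultaneously and recovers the inequality $b_i(S)\le a_i(S)$ of Remark~\ref{b less a} for free. The trade-off: the paper's formulation generalizes verbatim to an arbitrary ideal $E$, which is exactly what the duplication results in later sections require, while your argument is more self-contained and conceptually sharper for the ring case, exhibiting $b_i(S)$ and $a_i(S)$ as the initial and limiting values of the normalized order function along a residue class.
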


Let $E=\{e_1, \dots, e_r\}+S$ be an ideal of $S$ and let $b \in S$ be an odd integer.
The {\it numerical duplication} of $S$ with respect to $E$ and $b$ is defined in \cite{DS} as the numerical semigroup
$$
\du = 2 \cdot S \cup \{2 \cdot E + b\},
$$ 
where $2 \cdot X=\{2x \mid x \in X \}$. It is easy to see that 
$$
\du=\langle 2n_1, \dots,2n_\nu, 2e_1+b, \dots,2e_r+b \rangle.
$$

As we noticed in the introduction, the numerical duplication can be connected to the construction of the rings
$R(I)_{v,u} = \mathcal R_+(I)/((T^2+vT+u)\cap \mathcal R_+(I))$. In fact, if $R=k[\![S]\!]$, $I=(t^{e_1},	\dots , t^{e_r})$
and $u=t^b$, then $R(I)_{0,-u} \cong k[\![\du]\!]$.
More precisely, every element of $R(I)_{v,u} $ can be uniquely written in the form $f+gT$ with the multiplication induced by the equation $T^2+vT+u=0$.
Thus, when $R=k[\![S]\!]$, $v=0$ and $u=t^b$ the multiplication in $R(I)_{0,-u}$ is given by
\[(f(t)+g(t)T)(h(t)+l(t)T)=f(t)h(t)+t^bg(t)l(t)+(f(t)l(t)+g(t)h(t))T.\]
Hence, if $I=(t^{e_1},	\dots , t^{e_r})$
is a monomial ideal, it is easy to check that the map $R(I)_{0,-u} \rightarrow k[\![\du]\!]$
given by $f(t)+g(t)T\mapsto f(t^2)+g(t^2)t^b$ is an isomorphism
(we remark that in \cite{BDS} the authors forgot to state the hypothesis that $I$ has to be a monomial ideal).  

Now we are interested in studying the associated graded ring of $k[\![\du]\!]$. To this aim we prove a more general result. We recall that the Nagata's idealization is defined as follows: let $A$ be a ring and $N$ be an $A$-module; then, $A \ltimes N$ is the $A$-module $A \oplus N$ with the multiplication defined as $(r,m)(s,n)=(rs,rn+sm)$.
If $(A,\mathfrak{n})$ is local, then  $A\ltimes N$ is local with maximal ideal $\overline{\mathfrak n}:=\mathfrak n \oplus N$
and it is well-known that $\gr_{\overline{\mathfrak n}}(A\ltimes N)\cong \gr_{\mathfrak n}(A)\ltimes \gr_{\mathfrak n}(N)(-1)$; in fact, the homogeneous elements of degree $1$ are the elements of $\mathfrak n/\mathfrak n^2 \oplus N/\mathfrak nN$. For an element $f\in A$ with $\mathfrak{n}$-adic order $d$, the residue class of $f$ in $\mathfrak{n}^d/\mathfrak{n}^{d+1}$ is called the {\it initial form} of $f$ and is denoted by $f^*$. We use the same notation $f^*$ to denote the image of $f$ in $\gr_\fn(A)$.

We also recall that, if $(A,\mathfrak{n})$ is local, all the rings $A(I)_{v,u}$ are local with maximal ideal
$\overline{\mathfrak n}$ isomorphic, as $A$-module, to $\mathfrak{n}\oplus I$ (\cite[Proposition 2.1]{BDS}). With this notation, we can state the following general result.

\begin{proposition}\label{gr general}
Let $(A,\mathfrak n)$ be a local ring and let $I$ be a proper ideal. Assume that both $u$ and $v$ belong to $\mathfrak n$.
Then $\gr_{\overline{\mathfrak n}}(A(I)_{v,u}) \cong \gr_{\mathfrak n}(A) \ltimes \gr_{\mathfrak n}(I)(-1)$. 
\end{proposition}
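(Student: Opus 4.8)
The plan is to compute the $\overline{\mathfrak n}$-adic filtration of $A(I)_{v,u}$ explicitly and then read off both the module and the ring structure of the associated graded, mimicking the computation for the idealization $A \ltimes N$ recalled just above the statement. Throughout I write elements of $A(I)_{v,u}$ uniquely as $f + gT$ with $f \in A$ and $g \in I$, and I use the multiplication rule
\[
(f+gT)(h+lT) = (fh - ugl) + (fl + gh - vgl)\,T
\]
coming from $T^2 = -vT - u$. Recall that $\overline{\mathfrak n}$ consists of the elements $f + gT$ with $f \in \mathfrak n$ and $g \in I$; since $I$ is proper we have $I \subseteq \mathfrak n$, and hence $I^2 \subseteq \mathfrak n I$.

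The central claim is that for every $k \ge 1$,
\[
\overline{\mathfrak n}^{\,k} = \mathfrak n^k \oplus \mathfrak n^{k-1} I \, T,
\]
where the two summands denote the $A$-submodules $\{x + 0T : x \in \mathfrak n^k\}$ and $\{0 + yT : y \in \mathfrak n^{k-1}I\}$ (with the convention $\mathfrak n^0 = A$, so the case $k=1$ is the known description $\overline{\mathfrak n} = \mathfrak n \oplus IT$). I would prove this by induction on $k$. For the inclusion $\subseteq$, I multiply a general element of $\overline{\mathfrak n}$ by a general element of $\overline{\mathfrak n}^{\,k}$ and check term by term, via the rule above, that each summand lands in the predicted module; this is exactly the point where the hypotheses $u, v \in \mathfrak n$ (together with $I^2 \subseteq \mathfrak n I$) enter, since the cross terms $ugl$ and $vgl$ must be absorbed into one higher power of $\mathfrak n$. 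For the reverse inclusion I note that products of elements of $A$ stay in $A$, so $\mathfrak n^k \subseteq \overline{\mathfrak n}^{\,k}$, and that for $a \in \mathfrak n^{k-1}$ and $c \in I$ one has $(a+0T)(0+cT) = acT$, which produces all of $\mathfrak n^{k-1} I\, T$.

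From this description the graded pieces follow at once: because $\overline{\mathfrak n}^{\,k}$ and $\overline{\mathfrak n}^{\,k+1}$ are block diagonal in the $A$- and $T$-components, one gets, for $k \ge 1$,
\[
\overline{\mathfrak n}^{\,k}/\overline{\mathfrak n}^{\,k+1} \;\cong\; \mathfrak n^k/\mathfrak n^{k+1} \,\oplus\, \bigl(\mathfrak n^{k-1} I/\mathfrak n^{k} I\bigr)\,T,
\]
with degree-$0$ piece $A/\mathfrak n$. The first summand is the degree-$k$ component of $\gr_{\mathfrak n}(A)$ and the second is the degree-$k$ component of $\gr_{\mathfrak n}(I)(-1)$, which identifies the underlying graded modules. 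It then remains to check that the identification $a^* \mapsto (a+0T)^*$, $c^* \mapsto (cT)^*$ is a ring isomorphism: the product of two $A$-initial forms is computed inside the subring $\{f+0T\}$ and reproduces the product of $\gr_{\mathfrak n}(A)$, while $(a+0T)(cT) = (ac)T$ reproduces the module action of $\gr_{\mathfrak n}(A)$ on $\gr_{\mathfrak n}(I)(-1)$.

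The main obstacle, and the step where $u,v \in \mathfrak n$ is indispensable, is verifying that the ``$T$-part'' squares to zero in the associated graded, as the idealization structure demands. Unlike in $A \ltimes N$, the product of two $T$-elements is not literally zero: if $c \in \mathfrak n^{p-1}I \setminus \mathfrak n^p I$ and $c' \in \mathfrak n^{q-1}I \setminus \mathfrak n^q I$, then $(cT)(c'T) = -ucc' - vcc'\,T$. The key is that both terms have strictly higher order than expected: from $cc' \in \mathfrak n^{p+q-1} I$ (using $I^2 \subseteq \mathfrak n I$) together with $u,v \in \mathfrak n$ one gets $-ucc' \in \mathfrak n^{p+q+1}$ and $-vcc'\,T \in \mathfrak n^{p+q} I\, T$, so that $(cT)(c'T) \in \overline{\mathfrak n}^{\,p+q+1}$ and hence $(cT)^*(c'T)^* = 0$ in degree $p+q$. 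This confirms the vanishing of the square and completes the identification with $\gr_{\mathfrak n}(A) \ltimes \gr_{\mathfrak n}(I)(-1)$.
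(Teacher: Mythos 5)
Your proposal is correct and follows essentially the same route as the paper: both rest on the description $\overline{\mathfrak n}^{\,k}=\mathfrak n^{k}\oplus\mathfrak n^{k-1}IT$ (equivalently $\overline{\mathfrak n}^{\,k}/\overline{\mathfrak n}^{\,k+1}\cong \mathfrak n^{k}/\mathfrak n^{k+1}\oplus \mathfrak n^{k-1}I/\mathfrak n^{k}I$) and on the observation that the cross terms $ugl$ and $vgl$ have strictly higher order because $u,v\in\mathfrak n$ and $I^{2}\subseteq \mathfrak n I$, which is exactly how the paper kills $(uxy)^{*}$ and $(vxy)^{*}$ in its verification that $\varphi$ is multiplicative. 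The only difference is presentational: the paper imports the filtration formula from the proof of \cite[Proposition 2.3]{BDS}, whereas you prove it by induction and split the multiplicative check into the three cases ($A$-part times $A$-part, $A$-part times $T$-part, $T$-part times $T$-part) instead of computing with a general homogeneous element $r^{*}+x^{*}T$ at once.
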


\begin{proof}
By the proof of \cite[Proposition 2.3]{BDS}, $\bar{\mathfrak n}^i/\bar{\mathfrak n}^{i+1}=\{ r^*+x^*T \mid r^* \in \mathfrak n^i/\mathfrak n^{i+1} \text{ \ and \ } x^* \in \mathfrak n^{i-1}I/\mathfrak n^{i}I \}$ for all $i >0$.
	Therefore, we get a bijective map $\varphi : \gr_{\bar{\mathfrak n}}(A(I)_{v,u}) \rightarrow \gr_{\mathfrak n}(A) \ltimes \gr_{\mathfrak n}(I)(-1)$ by setting $\varphi(r^*+x^*T)=(r^*,x^*)$ for every homogeneous element $r^*+x^*T$ of $\gr_{\bar{\mathfrak n}}(A(I)_{v,u})$. 
	This is a ring isomorphism, because the multiplication of two homogeneous elements of the first ring is induced by the multiplication in $A(I)_{v,u}$ and then, since $u,v \in \mathfrak n$, we have  
\begin{gather*}
	\varphi((r^*+ x^* T) ( s^*+ y^* T))=
	\varphi((rs-uxy)^*+(ry+sx-vxy)^*T)= 
	\varphi((rs)^*+(ry+sx)^*T)= \\
	=((rs)^*,(ry+sx)^*)=
	(r^*,x^*)(s^*,y^*)=\varphi(r^*+x^*T) \cdot \varphi(s^*+y^*T). \qedhere
	\end{gather*}  
\end{proof}

In  the rest of the paper,  $E$  is an ideal of $S$ not containing $0$, minimally generated by $\{ e_1,\dots, e_r \}$ and
$I=(t^{e_1},	\dots , t^{e_r})$ is the corresponding monomial ideal of $R=k[\![S]\!]$.

\medskip
The next corollary follows immediately from Proposition~\ref{gr general}.

\begin{corollary} \label{idealization}
Let  $D=k[\![\du]\!]$ and let $\mathfrak n$ be its maximal ideal. Then, $\gr_{\mathfrak n}(D)\cong \gr_{\mathfrak m}(R)\ltimes \gr_{\mathfrak m}(I)(-1)$.
\end{corollary}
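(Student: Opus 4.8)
The plan is to obtain the statement as an immediate specialization of Proposition~\ref{gr general}. Recall that above we identified $D=k[\![\du]\!]$ with the ring $R(I)_{0,-t^b}$, where $R=k[\![S]\!]$ has maximal ideal $\fm$ and $I=(t^{e_1},\dots,t^{e_r})$. In the notation of Proposition~\ref{gr general} this is the ring $A(I)_{v,u}$ with $A=R$, $\fn=\fm$, $v=0$ and $u=-t^b$. Under the isomorphism $D\cong R(I)_{0,-t^b}$ the maximal ideal $\fn$ of $D$ corresponds to $\overline{\fm}$, so that $\gr_{\fn}(D)\cong \gr_{\overline{\fm}}(R(I)_{0,-t^b})$, and it suffices to apply the proposition to the right-hand side.

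Thus one only has to check that the hypotheses of Proposition~\ref{gr general} are met. First, $I$ is proper: since $E$ is an ideal of $S$ not containing $0$, each $e_i>0$, so $t^{e_i}\in\fm$ and hence $I\subseteq\fm\neq R$. Next, both $v$ and $u$ must lie in $\fm$. For $v=0$ this is trivial, and for $u=-t^b$ we use that $b$ is an odd element of $S$: in particular $b>0$, so $t^b$ has positive $\fm$-adic order and therefore $t^b\in\fm$, whence $-t^b\in\fm$ as well. With all hypotheses verified, the proposition yields $\gr_{\overline{\fm}}(R(I)_{0,-t^b})\cong \gr_{\fm}(R)\ltimes \gr_{\fm}(I)(-1)$, which is exactly the desired conclusion.

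There is no genuine obstacle here; this is precisely why the statement is recorded as a corollary rather than a theorem. The only points that require any attention are the bookkeeping of the sign convention (the second subscript of $R(I)_{0,-t^b}$ is $-t^b$, so it is this element, and not $t^b$ itself, that plays the role of $u$ in Proposition~\ref{gr general}) and the observation that the oddness of $b$ forces $b>0$, which is what guarantees that $u$ indeed belongs to $\fm$.
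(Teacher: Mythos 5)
Your proof is correct and takes essentially the same approach as the paper, which likewise obtains Corollary~\ref{idealization} as an immediate specialization of Proposition~\ref{gr general} via the identification $D\cong R(I)_{0,-t^b}$. Your explicit checks (that $I$ is proper because $0\notin E$, and that $v=0$ and $u=-t^b$ lie in $\fm$ because $b>0$) merely spell out the routine verifications the paper leaves implicit.
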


Notice that we can give an explicit isomorphism between the two graded rings $\gr_\mathfrak{n}(D)$ and $\gr_{\mathfrak m}(R)\ltimes \gr_{\mathfrak m}(I)(-1)$,
since it is induced by the isomorphism $R(I)_{0,-t^b} \rightarrow k[\![\du]\!]$. More precisely, a
 degree $i$ homogeneus element  of $\gr_{\mathfrak m}(R)\ltimes \gr_{\mathfrak m}(I)(-1)$ is
in $\mathfrak m^i/\mathfrak m^{i+1} \oplus \mathfrak m^{i-1}I/\mathfrak m^{i}I$; so it is of the form 
$(x(t)^*,i(t)^*)$ and it corresponds to $\left(x(t^2)+i(t^2)t^b\right)^*\in \mathfrak{n}^i/\mathfrak{n}^{i+1}$.

From the previous corollary we can deduce the following result (see \cite[Corollary~4.14]{AW} and \cite[Theorem~5.6]{FGR}).

\begin{corollary}\label{CM-Gor} Let $D=k[\![\du]\!]$ and let $\mathfrak n$ be its maximal ideal. Then the following statements hold. 
	\begin{enumerate}
	\item The graded ring $\gr_{\mathfrak n}(D)$ is Cohen-Macaulay if and only if $\gr_{\mathfrak m}(R)$ is Cohen-Macaulay and $\gr_{\mathfrak m}(I)$ is a maximal Cohen-Macaulay module of $\gr_{\mathfrak m}(R)$; 
	\item $\gr_{\mathfrak n}(D)$ is Gorenstein if and only if $\gr_{\mathfrak m}(R)$ is Cohen-Macaulay and $\gr_{\mathfrak m}(I)$ is a canonical module of $\gr_{\mathfrak m}(R)$.
	\end{enumerate}
\end{corollary}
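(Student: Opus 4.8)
The plan is to deduce both equivalences from the isomorphism of graded rings
$\gr_{\mathfrak m}(R)\ltimes\gr_{\mathfrak m}(I)(-1)\cong\gr_{\mathfrak n}(D)$
established in Corollary~\ref{idealization}, reducing the statement about $\gr_{\mathfrak n}(D)$ to the general theory of Nagata's idealization. Since the Cohen-Macaulay and Gorenstein properties are preserved under graded ring isomorphism and are insensitive to a shift of the internal grading, it suffices to decide when the idealization $A\ltimes N$ is Cohen-Macaulay, respectively Gorenstein, where I set $A=\gr_{\mathfrak m}(R)$ and $N=\gr_{\mathfrak m}(I)$, a finitely generated graded $A$-module.

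For part (1), I would record the two basic invariants of the idealization. The submodule $0\oplus N$ is a nilpotent ideal of $A\ltimes N$ with $(A\ltimes N)/(0\oplus N)\cong A$, so $\dim(A\ltimes N)=\dim A$; and, because $A\to A\ltimes N$ is module-finite, depth can be computed over $A$, giving $\mathrm{depth}(A\ltimes N)=\mathrm{depth}_A(A\oplus N)=\min\{\mathrm{depth}\,A,\ \mathrm{depth}_A N\}$. Hence $A\ltimes N$ is Cohen-Macaulay exactly when $\mathrm{depth}\,A=\dim A$ and $\mathrm{depth}_A N=\dim A$: the first condition says that $A$ is Cohen-Macaulay, while, since always $\mathrm{depth}_A N\le\dim_A N\le\dim A$, the second forces $\dim_A N=\dim A=\mathrm{depth}_A N$, i.e. $N$ is a maximal Cohen-Macaulay $A$-module. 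This is precisely \cite[Corollary~4.14]{AW}, which I would cite rather than reprove.

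For part (2), I would invoke Reiten's characterization in the graded form of \cite[Theorem~5.6]{FGR}: $A\ltimes N$ is Gorenstein if and only if $A$ is Cohen-Macaulay and $N$ is a canonical module of $A$. The mechanism behind this is that, once $A$ admits a canonical module $\omega_A$, the finiteness of $A\to A\ltimes N$ identifies the canonical module of $A\ltimes N$ with $\mathrm{Hom}_A(A\ltimes N,\omega_A)$, and this module is a rank-one free $(A\ltimes N)$-module exactly when $N\cong\omega_A$; the converse is the classical fact that $A\ltimes\omega_A$ is Gorenstein. Transporting this through the isomorphism of Corollary~\ref{idealization} yields assertion (2) verbatim.

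The only point requiring care, and the one I regard as the main (if modest) obstacle, is checking that these idealization results, ordinarily stated for a Noetherian local ring, apply to the standard graded ($^*$local) setting and are unaffected by the twist $(-1)$. This is routine: dimension, depth, type, and the existence and form of a canonical module all have graded versions agreeing with their local analogues after localization at the graded maximal ideal, and a degree shift alters none of them. With this checked, both equivalences follow immediately from Corollary~\ref{idealization}.
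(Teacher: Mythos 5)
Your proposal is correct and follows exactly the paper's own route: the paper likewise deduces the corollary directly from the isomorphism $\gr_{\mathfrak n}(D)\cong \gr_{\mathfrak m}(R)\ltimes \gr_{\mathfrak m}(I)(-1)$ of Corollary \ref{idealization} together with the cited idealization results \cite[Corollary~4.14]{AW} and \cite[Theorem~5.6]{FGR}. The extra details you supply (the depth/dimension computation and the graded-versus-local check) are a fleshed-out version of what the paper leaves implicit.
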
 

Notice that if $\gr_{\mathfrak n}(D)$ is Gorenstein, also $D$ has to be Gorenstein and, therefore, $I$ has to be a canonical ideal for $R$. Nevertheless, it is not always true that the canonical module of $\gr_{\mathfrak m}(R)$ is of the expected form, i.e. of the form $\gr_{\mathfrak m}(\omega_R)$, with $\omega_R$ canonical module of $R$; see Corollary \ref{canonical module}.

Our aim in the next two sections is to find numerical conditions on $E$ so that $\gr_{\mathfrak n}(D)$ is Cohen-Macaulay, Gorenstein or a complete intersection.

\section{Cohen-Macaulay property}

We would like to combine Theorem \ref{BaFr} and Corollary \ref{CM-Gor} (1) to study when the associated graded ring of the numerical duplication is Cohen-Macaulay. To this purpose we first study the associated graded module of an ideal in order to establish a result for ideals analogous to Theorem \ref{BaFr}. 

We define the Ap\'ery set of the semigroup ideal $E$ with respect to the multiplicity $m$ as $\Ap_{m}(E)=\{\alpha_0, \dots, \alpha_{m-1}\}$, 
where $\alpha_i$ is the smallest element in $E$ that is congruent to $i$ modulo $m$; we notice that $m$ may not be in $E$. 
Moreover, when we write $E=\{e_1, \dots, e_r\}+S$, we assume that the generators are in increasing order, so that
$e_1=\min(E)$.

We define the  ideal $E'$ of $S'$ as 
$$
E'=\bigcup_{l\geq 1} \ \left(E+(l-1)M\right) - lM.
$$
Notice that $E'=(E+(l-1)M)-lM$  for $l$ large enough.

\begin{lemma}
Let $S$ be a numerical semigroup with maximal ideal $M$ and multiplicity $m$. 
Then $E'=\{e_1-m, \dots, e_r-m\}+S'$.
\end{lemma}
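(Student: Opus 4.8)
The plan is to prove the two inclusions separately, exploiting the definition $E' = (E+(l-1)M) - lM$ for $l$ large enough (which holds by the same stabilization principle that gives $S' = lM - lM$). Throughout I will use that $M = \{0\} \cup \{s \in S \mid s \geq m\}$ consists of $0$ together with the nonzero elements of $S$, and that $S' = \langle m, n_2-m, \dots, n_\nu - m\rangle$.

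\medskip

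First I would show that $\{e_1-m,\dots,e_r-m\}+S' \subseteq E'$. It suffices to check that each $e_i - m \in E'$, since $E'$ is an $S'$-relative-ideal (being a difference of the relative ideal $E+(l-1)M$ by $lM$, it absorbs addition by $S'$). Fix an $l$ large enough that $E' = (E+(l-1)M)-lM$. To show $e_i - m \in (E+(l-1)M)-lM$, I must verify $(e_i - m) + lM \subseteq E + (l-1)M$. Take any $w \in lM$; then $w = m + w'$ with $w' \in (l-1)M$ (using $lM = m + (l-1)M$ for $l$ large, or directly that any element of $lM$ is $m$ plus an element of $(l-1)M$ once $l$ is large enough), so $(e_i - m) + w = e_i + w' \in E + (l-1)M$ because $e_i \in E$ and $w' \in (l-1)M$. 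This gives the first inclusion.

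\medskip

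For the reverse inclusion $E' \subseteq \{e_1-m,\dots,e_r-m\}+S'$, I would take $z \in E'$, so $z + lM \subseteq E + (l-1)M$ for the chosen large $l$. The idea is to pick a suitable large element $w \in lM$, deduce $z + w \in E + (l-1)M$, write $z + w = e_i + s$ with $s \in (l-1)M$, and then transfer this back to membership in $\{e_1-m,\dots,e_r-m\}+S'$. Concretely, since $S' = lM - lm$ for large $l$, elements of the form $z + km$ for large $k$ land in controlled places, and I can arrange $z + (l-1)m + m = z + lm \in E + (l-1)M$, hence $z + lm = e_i + s$ with $s \in (l-1)M$. Then $z = (e_i - m) + (s - (l-1)m) \cdot$(adjusted) and the point $s - (l-1)m \in lM - lm \subseteq S'$ after absorbing the right multiple of $m$; keeping careful track of how many copies of $m$ are distributed between the $e_i - m$ term and the $S'$ term yields $z = (e_i - m) + s'$ with $s' \in S'$.

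\medskip

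The main obstacle I anticipate is the bookkeeping in this second inclusion: one must choose the auxiliary element of $lM$ large enough in every congruence class so that subtracting off $(l-1)m$ (or $lm$) keeps the remainder inside $S' = lM - lm$, while simultaneously ensuring the decomposition $z+w = e_i + s$ has $s$ in a relative ideal that translates cleanly into $S'$. The cleanest route is probably to first establish the analogous stabilization $E' = (E+(l-1)M)-lm$ (difference by the principal ideal $lm$ rather than $lM$), paralleling $S' = lM - lm$, since differences by a single element are far easier to manipulate than differences by the whole ideal $lM$; with that reformulation in hand both inclusions reduce to routine translations by $m$ and the fact that $S'$ is precisely the set of differences $lM - lm$.
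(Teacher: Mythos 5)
Your proposal is correct and follows essentially the same route as the paper's own proof: the forward inclusion via the identity $lM = m + (l-1)M$ for $l$ large (the paper cites \cite[Proposition I.2.1(b)]{BDF} for this), and the reverse inclusion by specializing $z+lM \subseteq E+(l-1)M$ to the single element $lm$, writing $z+lm = e_i + s$ with $s \in (l-1)M$, and shifting to get $z = (e_i-m) + (s-(l-1)m)$ with $s-(l-1)m \in S'$ --- so the ``bookkeeping obstacle'' you worry about does not actually arise, and the detour through $E' = (E+(l-1)M)-lm$ is unnecessary. Two harmless slips: your parenthetical claim that $M$ contains $0$ is false (the paper defines $M = S\setminus\{0\}$), and $s - (l-1)m$ lies in $(l-1)M - (l-1)m$ rather than $lM - lm$, but both sets equal $S'$ once $l$ is large enough.
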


\begin{proof}
	The inclusion $(\supseteq)$ follows from the fact that each $x \in \{e_1-m, \dots, e_r-m\}+S'$ 
	is of the form $x=e_i-m+y$ with $i=1,\dots,r$ and $y\in S'$. 
	Hence, for $l$ big enough, $y+lM\subseteq lM$ and so $x+lM \in (e_i-m)+lM =e_i+(l-1)M \subseteq E+(l-1)M $, where the equality holds by \cite[Proposition I.2.1(b)]{BDF}.
	
	Conversely, take $x$ such that $x+lM \subseteq E+(l-1)M$, for some $l$; in particular, $x+lm =e_i+y$, for some $i=1,\dots,r$, and $y \in (l-1)M$. Hence, $x=(e_i-m)+(y-(l-1)m)\in  (e_i-m)+S' \subseteq \{e_1-m, \dots, e_r-m\}+S'$.
\end{proof}

Note that, if $m \in E$, then $0\in E'$ and so $E'=S'$. Let $\{\alpha'_{0}, \dots,\alpha'_{m-1}\}$ be the Ap\'ery set with respect to $m$ of $E'$. Let  $a_i(E)$ denote the unique integer such that $\alpha'_i+m\,a_i(E)=\alpha_i$.
We notice that $a_i(E)$ is indeed the largest number $\lambda$ such that $\alpha_i-\lambda m\in E'$, i.e the smallest number $\lambda$ such that $\alpha'_i+\lambda m\in E$.
We also define the order of $e \in E$ as the integer $$\ord_E(e):=\max\{l+1 \mid e\in lM+E\};$$ moreover, for
all $i=0, \dots, m-1$, we set $ b_i(E):=\ord_E( \alpha_i).$ We will use $a(E)$ and $b(E)$, respectively, to denote the vectors $[a_0(E),\ldots,a_{m-1}(E)]$ and $[b_0(E),\ldots,b_{m-1}(E)]$.

\begin{remark}\label{b less a}
	If $\alpha_i\in lM+E$, then  $\alpha_i=\sum^p_{j=1}r_js_j+e$, for some $s_j\in M$, $e\in E$ and $\sum^p_{j=1}r_j=l$. Therefore, $\alpha_i-(l+1)m=\sum^p_{j=1}r_j(s_j-m)+(e-m)\in E'$, since $e-m\in E'$. In particular, $a_i(E)\geq b_i(E)$.
\end{remark}

\begin{lemma}\label{ord E}
	Let $E$ be an ideal of $S$. The following statements are equivalent:
	\begin{enumerate}
		\item $\ord_E(e+m)=\ord_E(e)+1$, for all $e\in E$;
		\item $\ord_E(e+\lambda m)=\ord_E(e)+\lambda$ for all $e\in\Ap_{m}(E)$ and $\lambda \in \mathbb N$;
		\item $a_i(E)=b_i(E)$ for all $i=0, \ldots, m-1$.
	\end{enumerate} 
\end{lemma}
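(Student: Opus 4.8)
The plan is to prove the cycle $(1)\Rightarrow(2)\Rightarrow(3)\Rightarrow(1)$, after isolating two elementary estimates on $\ord_E$ that carry most of the weight. The first is a \emph{lower} bound: since $m\in M$, from $x\in lM+E$ we get $x+m=(\mu+m)+f\in (l+1)M+E$, so iterating yields $\ord_E(\alpha_i+\lambda m)\ge \ord_E(\alpha_i)+\lambda=b_i(E)+\lambda$ for every $\lambda\ge 0$. The second is an \emph{upper} bound coming from the computation in Remark~\ref{b less a}, which in fact shows for any $x\in E$ that $x\in lM+E$ implies $x-(l+1)m\in E'$. Applying this to $x=\alpha_i+\lambda m$ and recalling that $a_i(E)$ is, by definition, the largest $\mu$ with $\alpha_i-\mu m\in E'$, one gets $l+1-\lambda\le a_i(E)$ and hence $\ord_E(\alpha_i+\lambda m)\le a_i(E)+\lambda$. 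Together these give the key chain
$$b_i(E)+\lambda\ \le\ \ord_E(\alpha_i+\lambda m)\ \le\ a_i(E)+\lambda\qquad(\star)$$
valid for all $i$ and all $\lambda\ge 0$.

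With $(\star)$ in hand, two of the three implications are essentially immediate. For $(1)\Rightarrow(2)$ I would argue by a one-line induction: applying (1) successively to $\alpha_i,\ \alpha_i+m,\ \alpha_i+2m,\dots$ (all of which lie in $E$) gives $\ord_E(\alpha_i+\lambda m)=\ord_E(\alpha_i)+\lambda$, which is exactly (2). For $(3)\Rightarrow(1)$, the hypothesis $a_i(E)=b_i(E)$ collapses the two inequalities in $(\star)$ to the equality $\ord_E(\alpha_i+\lambda m)=b_i(E)+\lambda$ for all $i,\lambda$; writing an arbitrary $e\in E$ in the form $e=\alpha_i+km$ (possible since $\alpha_i=\min\{s\in E\mid s\equiv e\}$) then gives $\ord_E(e+m)=b_i(E)+k+1=\ord_E(e)+1$.

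The main obstacle is $(2)\Rightarrow(3)$, where I must show that the order attains its maximal value $a_i(E)+\lambda$ for sufficiently deep elements; this is the only place where the blowup genuinely enters. Here I would use the stabilization of $E'$: since $\alpha'_i=\alpha_i-a_i(E)m\in E'$ and $E'=(E+(l-1)M)-lM$ for $l$ large, we have $\alpha'_i+lm\in E+(l-1)M$ for all large $l$, that is, $\alpha_i+(l-a_i(E))m\in (l-1)M+E$. Setting $\lambda=l-a_i(E)$ this reads $\alpha_i+\lambda m\in (a_i(E)+\lambda-1)M+E$, whence $\ord_E(\alpha_i+\lambda m)\ge a_i(E)+\lambda$; combined with the upper estimate in $(\star)$ this forces $\ord_E(\alpha_i+\lambda m)=a_i(E)+\lambda$ for all large $\lambda$. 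But hypothesis (2) asserts that the same quantity equals $b_i(E)+\lambda$, so comparing the two expressions yields $a_i(E)=b_i(E)$, completing the cycle. I expect the remaining points to be routine bookkeeping: the convention $0M=\{0\}$ ensuring $\ord_E(e)\ge 1$ for $e\in E$, and the degenerate case $m\in E$ (where $E'=S'$), which one checks directly.
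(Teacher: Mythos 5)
Your proposal is correct and takes essentially the same route as the paper's proof: the same cycle $(1)\Rightarrow(2)\Rightarrow(3)\Rightarrow(1)$, built from the same three ingredients — the computation of Remark~\ref{b less a} (membership of $x-(l+1)m$ in $E'$), the characterization of $a_i(E)$ as the largest $\mu$ with $\alpha_i-\mu m\in E'$ (equivalently, the Ap\'ery property of $\alpha_i'$ in $E'$), and the stabilization $E'=(E+(l-1)M)-lM$ for $l\gg 0$. The only difference is organizational: you isolate the two-sided estimate $(\star)$ up front and run $(3)\Rightarrow(1)$ directly from it, whereas the paper embeds the identical computation in a proof by contradiction.
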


\begin{proof}
	(1)$\Rightarrow$(2). This is clear. 
	
	\smallskip
	(2)$\Rightarrow$(3). By Remark~\ref{b less a}, we have $a_i(E)\geq b_i(E)$ for all $i=0, \ldots, m-1$. 
	As $\alpha'_i\in E'$, by definition of $E'$ we get  $\alpha'_i+n m\in E+(n-1)M$, for $n\gg 0$. In particular, it follows that $\alpha_i+(n-a_i(E))m\in E+(n-1)M$ and, therefore, since $\ord_E(\alpha_i)=b_i(E)$, the hypothesis implies 
	\[b_i(E)+n-a_i(E)=\ord_E(\alpha_i+(n-a_i(E))m)\geq n.\]
	Hence, $b_i(E)\geq a_i(E)$ and the result follows.
	
	\smallskip
	(3)$\Rightarrow$(1). Let $e\in E$. Then $e=\alpha_i+\lambda m$ for some $\alpha_i\in\Ap_{m}(E)$ and $\lambda\geq 0$. If $\ord_E(e+m)>\ord_E(e)+1$, then  
	\[\ord_E(e+m)=\ord_E(\alpha_i+(\lambda+1)m)> b_i(E)+\lambda+1=a_i(E)+\lambda+1.\]
	
	Hence, $\alpha_i+(\lambda+1)m=\sum_j r_js_j +e$, where $\sum_j r_j=a_i(E)+\lambda+1$ and $e\in E$. Moreover, since $\alpha_i'=\alpha_i - a_i(E)m$, we get 
	\[\alpha'_{i}-m=\alpha_i + (\lambda+1)m- (a_i(E)+\lambda+2)m=\sum_j r_j(s_j-m)+(e -m)\in E',\]
	that is  contradiction, because $\alpha'_i \in \Ap_{m}(E')$.
\end{proof}


\begin{lemma}\label{Gl1} The following statements hold true.
	\begin{enumerate} 
		\item 	Given $a\in S$ and $b\in E$, we have $(t^a)^* \cdot (t^b)^*=0$ if and only if $\ord_E(a+b)>\ord_S(a)+\ord_E(b)$. 
		\item  For any $s\in S\setminus\{m\}$,    $(t^{s})^*$ is nilpotent. 
	\end{enumerate}
\end{lemma}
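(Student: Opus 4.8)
The plan is to read off both statements from the defining description of multiplication of initial forms in an associated graded module, together with a short computation with orders. For (1), regard $(t^a)^*\in\gr_\fm(R)$ and $(t^b)^*\in\gr_\fm(I)$, and recall that $\gr_\fm(I)=\bigoplus_{i\ge 0}\fm^iI/\fm^{i+1}I$ is a graded $\gr_\fm(R)$-module in which $\fm^iI$ is the monomial ideal spanned by the $t^x$ with $x\in iM+E$. Hence the degree of $(t^b)^*$ in $\gr_\fm(I)$ is $\max\{i\mid b\in iM+E\}$, which by the definition of $\ord_E$ is exactly $\ord_E(b)-1$, whereas $(t^a)^*$ has degree $\ord_S(a)$ in $\gr_\fm(R)$. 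The module product $(t^a)^*(t^b)^*$ therefore lives in degree $\ord_S(a)+\ord_E(b)-1$ and, by definition of the action, is the class of $t^a t^b=t^{a+b}$ in $\fm^{\,\ord_S(a)+\ord_E(b)-1}I/\fm^{\,\ord_S(a)+\ord_E(b)}I$. Since the class of a monomial $t^x$ in $\fm^jI/\fm^{j+1}I$ is nonzero exactly when $x$ has $I$-filtration order $j$, the product vanishes iff the order $\ord_E(a+b)-1$ of $t^{a+b}$ exceeds $\ord_S(a)+\ord_E(b)-1$; the two shifts by $-1$ cancel and leave precisely $\ord_E(a+b)>\ord_S(a)+\ord_E(b)$. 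The only genuine work here is to carry the $+1$ hidden in the definition of $\ord_E$; everything else is the definition of the product of initial forms.

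For (2), I would translate nilpotency into an order inequality: the power $((t^s)^*)^N$ is the class of $t^{Ns}$ in $\fm^{\,N\ord_S(s)}/\fm^{\,N\ord_S(s)+1}$, so it vanishes precisely when $\ord_S(Ns)>N\,\ord_S(s)$, and it suffices to exhibit a single such $N$. The efficient choice is $N=m$, for which $\ord_S(ms)=s$: writing $ms$ as a sum of $s$ copies of $m$ shows $ms\in sM$, so $\ord_S(ms)\ge s$, while every summand lying in $M$ is at least $m$, forcing $\ord_S(ms)\le ms/m=s$. Consequently $((t^s)^*)^m=0$ as soon as $s>m\,\ord_S(s)$, that is, as soon as $\ord_S(s)<s/m$.

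It then remains to verify this strict inequality. The same lower bound gives $s\ge m\,\ord_S(s)$, i.e. $\ord_S(s)\le\lfloor s/m\rfloor$, and whenever $s$ is not a multiple of $m$ the ratio $s/m$ is not an integer, so $\ord_S(s)\le\lfloor s/m\rfloor<s/m$ and the strictness holds; this yields $((t^s)^*)^m=0$ for every such $s$, in particular for each minimal generator $n_2,\dots,n_\nu$ and for all $s\in S\setminus\langle m\rangle$. The element $m$ itself, and more generally any multiple $cm$, satisfies $\ord_S(cm)=c=cm/m$, so $(t^{cm})^*=((t^m)^*)^{c}$ is a power of the single non-nilpotent generator $(t^m)^*$ and is not nilpotent — these are exactly the classes at which $\ord_S(s)=s/m$. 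The one point requiring care in the whole argument is the choice $N=m$ in part (2), together with the observation that the failure of strictness in $\ord_S(s)\le s/m$ is precisely what distinguishes the multiples of $m$ from the remaining elements; once this is seen, both parts reduce to the order computations above.
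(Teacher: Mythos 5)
Your proof is correct, but it takes a different route from the paper's, which gives essentially no argument: there, part (1) is declared ``clear by definition'' --- your first paragraph is exactly that definitional bookkeeping, carried out carefully, including the shift by one between $\ord_E(b)$ and the degree $\ord_E(b)-1$ of $(t^b)^*$ in $\gr_\fm(I)$ --- and part (2) is outsourced to \cite[Lemma 5]{G}. For (2) you instead give a self-contained elementary proof, and the choice of the exponent $N=m$ is the right trick: $\ord_S(ms)=s$ together with $\ord_S(s)\le\lfloor s/m\rfloor<s/m$ (the strict inequality being valid exactly when $m\nmid s$) gives the explicit nilpotency $((t^s)^*)^m=0$. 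What your argument buys, besides independence from the reference, is precision: it shows that $(t^s)^*$ is nilpotent exactly when $s\in M\setminus\langle m\rangle$, and in particular that the lemma as literally stated is too generous, since for $s=2m\in S\setminus\{m\}$ (or for $s=0$) the class $(t^s)^*=((t^m)^*)^{s/m}$ is not nilpotent. This corrected form is what \cite[Lemma 5]{G} actually provides and is all the paper needs: in the proof of Proposition \ref{ECM}, after the nilpotent terms of a homogeneous non-zero-divisor of degree $d$ are discarded, the surviving term is a nonzero scalar multiple of $(t^{dm})^*=((t^m)^*)^d$, which still forces $(t^m)^*$ to be a non-zero-divisor. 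So your proof should be read as establishing (and slightly repairing) the statement rather than deviating from it.
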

\begin{proof}
The first statement is clear by definition and the second one is the subject of \cite[Lemma 5]{G}.
\end{proof}

\begin{remark}
	We notice that the only minimal monomial prime ideal of $\gr_{\mathfrak m}(R)$ is the ideal generated by $\{(t^{n_2})^*,\ldots,(t^{n_\nu})^*\}$, cf.~\cite[Corollary~2.3]{Huang-2015}. Moreover, taking  $I=(t^{e_1},	\dots , t^{e_r})$, since $\fm^{n-1}I\neq\fm^nI$ for all $n$,  $\gr_{\mathfrak m}(I)$ has positive dimension and, then, has dimension one.
\end{remark}

\begin{proposition} \label{ECM} Let $E=\{e_1, \dots, e_r\}+S$ and let $I=(t^{e_1},	\dots , t^{e_r})$.
	The following statements are equivalent:
	\begin{enumerate}
		\item  $\gr_{\mathfrak m}(I)$ is a one-dimensional  Cohen-Macaulay $\gr_{\mathfrak m}(R)$-module;
		\item $(t^{m})^*$ is not a zero-divisor of $\gr_{\mathfrak m}(I)$;
		\item $\ord_E(e+m)=\ord_E(e)+1$, for all $e\in E$;
		\item $a_i(E)=b_i(E)$ for all $i=0, \ldots, m-1$.
	\end{enumerate}
\end{proposition}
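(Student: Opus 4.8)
The plan is to prove the chain of equivalences by establishing a cycle of implications, exploiting the fact that Lemma~\ref{ord E} already gives me the equivalence $(3)\Leftrightarrow(4)$ for free. So the real work is to connect the module-theoretic conditions $(1)$ and $(2)$ to the numerical/order-theoretic conditions $(3)$, and I would organize the argument as $(1)\Rightarrow(2)\Rightarrow(3)\Rightarrow(1)$, invoking Lemma~\ref{ord E} to close the loop through $(4)$.

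\emph{The implication $(1)\Rightarrow(2)$.} Since $\gr_{\mathfrak m}(I)$ has dimension one (as noted in the Remark preceding the statement), a maximal Cohen-Macaulay module over the one-dimensional ring $\gr_{\mathfrak m}(R)$ must admit a nonzerodivisor in degree one. The natural candidate is $(t^m)^*$, the initial form of the uniformizing parameter, since $(t^m)^*$ generates (up to radical) a system of parameters: by Lemma~\ref{Gl1}(2) every other $(t^s)^*$ with $s\neq m$ is nilpotent, so $(t^m)^*$ is the only degree-one element that can act as a nonzerodivisor. Hence Cohen-Macaulayness forces $(t^m)^*$ not to be a zerodivisor on $\gr_{\mathfrak m}(I)$, which is exactly $(2)$. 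Conversely, any single nonzerodivisor on a one-dimensional module makes it Cohen-Macaulay, so this step is essentially an equivalence and is the easy part.

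\emph{The implication $(2)\Rightarrow(3)$ (and back).} This is where the combinatorics enters and is the crux of the argument. I would translate the statement ``$(t^m)^*$ is a nonzerodivisor on $\gr_{\mathfrak m}(I)$'' into the language of orders using Lemma~\ref{Gl1}(1). A homogeneous element of $\gr_{\mathfrak m}(I)$ is represented by some $(t^e)^*$ with $e\in E$, sitting in degree $\ord_E(e)-1$ (matching the shift in Corollary~\ref{idealization}), and multiplication by $(t^m)^*$ sends it to $(t^{e+m})^*$. By Lemma~\ref{Gl1}(1), this product vanishes precisely when $\ord_E(e+m)>\ord_S(m)+\ord_E(e)=1+\ord_E(e)$, since $\ord_S(m)=1$ because $m$ is the multiplicity. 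Therefore $(t^m)^*$ being a nonzerodivisor is equivalent to the nonexistence of any $e\in E$ with $\ord_E(e+m)>\ord_E(e)+1$; combined with the trivial inequality $\ord_E(e+m)\ge\ord_E(e)+1$ (which always holds since $e\in\ell M+E$ gives $e+m\in(\ell+1)M+E$), this yields exactly condition $(3)$.

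\emph{The main obstacle.} The subtle point to verify carefully is that $(t^m)^*$ being a nonzerodivisor on \emph{all} homogeneous elements of $\gr_{\mathfrak m}(I)$ really reduces to checking the monomial generators $(t^e)^*$, and that no cancellation occurs among nonmonomial combinations. This relies on the fact that $\gr_{\mathfrak m}(I)$ is spanned over $k$ by the classes of monomials $t^e$ with $e\in E$, each landing in a well-defined graded component indexed by $\ord_E(e)$, so that a $k$-linear combination is a nonzerodivisor-image iff each monomial summand is; here the hypothesis that $I$ is a \emph{monomial} ideal is essential. I would make this precise by describing the graded pieces $\mathfrak m^{i-1}I/\mathfrak m^i I$ as $k$-vector spaces with basis $\{(t^e)^* \mid e\in E,\ \ord_E(e)=i\}$, after which the reduction to monomials is immediate and the order-inequality translation above completes the proof. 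Finally, Lemma~\ref{ord E} gives $(3)\Leftrightarrow(4)$, closing the cycle.
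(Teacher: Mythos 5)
Your proposal is correct and follows essentially the same route as the paper's proof: the identical cycle $(1)\Rightarrow(2)\Rightarrow(3)\Rightarrow(1)$ with Lemma~\ref{ord E} supplying $(3)\Leftrightarrow(4)$, Lemma~\ref{Gl1} invoked in exactly the same two places, and your monomial-basis description of the graded pieces $\mathfrak m^{i-1}I/\mathfrak m^{i}I$ is precisely the mechanism behind the paper's smallest-exponent contradiction argument for $(3)\Rightarrow(1)$. The only loose point is your claim that $(t^m)^*$ is the \emph{only} degree-one element that could be a nonzerodivisor (false as stated, since e.g.\ $(t^m)^*+(t^{n_2})^*$ is not nilpotent); the paper makes this step precise by writing an arbitrary nonzerodivisor as $\sum_i k_i(t^{s_i})^*$ and deleting its nilpotent summands, which is the rigorous form of what you intend.
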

\begin{proof}
	(1)$\Rightarrow$(2). Let $f=\sum^p_{i=1}k_i  (t^{s_i})^*$ be a non-zero-divisor of $\gr_{\mathfrak m}(I)$. 	If $k_j>0$ for some $s_j\neq m$, then  $(t^{s_j})^*$ is nilpotent by Lemma~\ref{Gl1}(2) and so $\sum_{i\neq j}k_i  (t^{s_i})^*$ is again a non-zero-divisor. Thus, we may assume that $s_i=m$ for all $i$, in particular $(t^{m})^*$ is a non-zero-divisor.
	
	(2)$\Rightarrow$(3). It follows directly by Lemma~\ref{Gl1}(1).
	
	(3)$\Rightarrow$(1).  We claim $(t^{m})^*$ is not a zero-divisor of $\gr_{\mathfrak m}(I)$ and the result follows immediately, since the dimension of $\gr_{\mathfrak m}(I)$ is one. Assume, on the contrary, that $(t^{m})^*\cdot f=0$ for some $f\in \fm^{n-1}I/\fm^nI$. We may write  $f=\sum^p_{i=1}h_i (t^{s_i})^*$, where $0\neq h_i\in k$ and  $s_1<\cdots<s_p$ belong to $(n-1)M+E\setminus nM+E$.  Now, 
	\[(t^{m+s_1})^*=\sum^p_{i=2}-\frac{h_i}{h_1}(t^{m+s_i})^* \in \mathfrak{m}^nI/\mathfrak{m}^{n+1}I.\]
Since $s_1<s_i$ for all $i=2,\ldots,p$, we get $(t^{m+s_1})^*=0  \in \mathfrak{m}^nI/\mathfrak{m}^{n+1}I$, that is equivalent to 
	\[s_1+m\in (n+1)M+E,\]
	i.e. $\ord_E(m+s_1)\geq n+2$, a contradiction.
	
	(3)$\Leftrightarrow$(4). It is proved in Lemma \ref{ord E}.
\end{proof} 

Since $\ord_S(s)=\ord_M(s)$ for every $s \in M$, we re-obtain as a particular case the following known result.

\begin{corollary}\label{CM2}\cite[Theorem~7, Remark~8]{G}, \cite[Theorem~2.6]{BF} 
	The following are equivalent: 
	\begin{enumerate}
		\item $\gr_{\mathfrak m}(R)$ is Cohen-Macaulay;
		\item $\ord_S(s+m)=\ord_S(s)+1$, for all $s\in M$;
		\item $\ord_S(s+\lambda m)=\ord_S(s)+\lambda$ for all $s\in\Ap_{m}(S)$ and $\lambda \in \mathbb N$;
		\item $a_i(S)=b_i(S)$, for all $i=0, \ldots, m-1$.
	\end{enumerate}
\end{corollary}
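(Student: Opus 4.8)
The plan is to apply Proposition~\ref{ECM} to the particular ideal $E=M$, whose minimal generators are $n_1,\dots,n_\nu$ and whose associated monomial ideal is $I=(t^{n_1},\dots,t^{n_\nu})=\mathfrak m$, so that $\gr_{\mathfrak m}(I)=\gr_{\mathfrak m}(\mathfrak m)$. Granting the equivalence of the four statements of that proposition in this case, the corollary follows once each statement is matched with the corresponding one here. Using the remark $\ord_M(s)=\ord_S(s)$ for $s\in M$, statement~(3) of Proposition~\ref{ECM} becomes $\ord_S(s+m)=\ord_S(s)+1$ for all $s\in M$, which is verbatim statement~(2); Lemma~\ref{ord E} then promotes it to the Ap\'ery-type statement and to the equality of the vectors $a$ and $b$. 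So the chain (2)$\Leftrightarrow$(3)$\Leftrightarrow$(4) is essentially a transcription, and the only real work is to reconcile the slightly different data attached to $M$ and to $S$, and to identify statement~(1) of Proposition~\ref{ECM} with the Cohen-Macaulayness of the ring.

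For the bookkeeping, note first that $\Ap_m(M)=\{m,\omega_1,\dots,\omega_{m-1}\}$ and $\Ap_m(S)=\{0,\omega_1,\dots,\omega_{m-1}\}$ agree away from the residue class of $0$, where $M$ contributes $m$ and $S$ contributes $0$. Since $\lambda m\in\lambda M\setminus(\lambda+1)M$, we have $\ord_S(\lambda m)=\lambda$ for every $\lambda$, so both exceptional Ap\'ery elements satisfy the order condition of statement~(3) automatically; hence the Ap\'ery-type statement for $M$ is exactly statement~(3) here. For the microinvariants, $m\in M$ forces $0\in M'$, whence $M'=S'$ and $\Ap_m(M')=\Ap_m(S')$; comparing $\alpha'_i+m\,a_i(M)=\alpha_i$ with $\omega'_i+m\,a_i(S)=\omega_i$ gives $a_i(M)=a_i(S)$ and $b_i(M)=b_i(S)$ for $i\neq 0$, while for $i=0$ one computes $a_0(M)=b_0(M)=1$ and $a_0(S)=b_0(S)=0$. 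Therefore $a_i(M)=b_i(M)$ for all $i$ if and only if $a_i(S)=b_i(S)$ for all $i$, which is statement~(4).

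It remains to see that statement~(1) of Proposition~\ref{ECM} with $E=M$, namely that $\gr_{\mathfrak m}(\mathfrak m)$ is a one-dimensional Cohen-Macaulay $\gr_{\mathfrak m}(R)$-module, is equivalent to statement~(1) here. Here I would use the degree-shifted isomorphism $\gr_{\mathfrak m}(\mathfrak m)\cong\gr_{\mathfrak m}(R)_+$ of graded modules, where $\gr_{\mathfrak m}(R)_+$ is the irrelevant maximal ideal, fitting in the exact sequence $0\to\gr_{\mathfrak m}(R)_+\to\gr_{\mathfrak m}(R)\to k\to 0$. As ring and module are both one-dimensional, each is Cohen-Macaulay precisely when it admits a homogeneous nonzerodivisor of positive degree; and a positive-degree homogeneous $x$ is a nonzerodivisor on $\gr_{\mathfrak m}(R)$ if and only if it is one on $\gr_{\mathfrak m}(R)_+$. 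Indeed the forward direction is clear as $\gr_{\mathfrak m}(R)_+$ is a submodule, and for the converse, if $xy=0$ with $y$ homogeneous, then either $\deg y>0$, so $y\in\gr_{\mathfrak m}(R)_+$ and $y=0$, or $\deg y=0$, so $y\in k$ and $y=0$ since $\deg x>0$.

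The steps that need the most care are exactly the boundary case $i=0$ and the swap of $0$ for $m$ between $\Ap_m(S)$ and $\Ap_m(M)$: there the corollary is not a literal specialization of Proposition~\ref{ECM} but relies on the auxiliary facts $\ord_S(\lambda m)=\lambda$ and $M'=S'$. The module-versus-ring passage in the last paragraph is standard but should be phrased carefully, since Proposition~\ref{ECM} speaks about the module $\gr_{\mathfrak m}(I)$ whereas statement~(1) here is about the ring $\gr_{\mathfrak m}(R)$ itself.
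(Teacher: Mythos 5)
Your proposal is correct and takes essentially the same route as the paper, which obtains this corollary precisely by specializing Proposition~\ref{ECM} (and Lemma~\ref{ord E}) to the ideal $E=M$, using $\ord_S(s)=\ord_M(s)$ for $s\in M$. The details you supply---the $i=0$ Ap\'ery class where $M$ contributes $m$ instead of $0$, and the passage from the module $\gr_{\mathfrak m}(\mathfrak m)\cong\gr_{\mathfrak m}(R)_+$ to the ring $\gr_{\mathfrak m}(R)$ (which the paper could alternatively draw from Theorem~\ref{BaFr})---are exactly the bookkeeping the paper leaves implicit, and they are handled correctly.
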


We are now ready to give a numerical interpretation of Corollary \ref{CM-Gor} (1). We recall that we denote by $(D, \mathfrak n)$ the ring $k[\![\du]\!]$. 

\begin{theorem}
 The associated graded ring   $\gr_{\mathfrak n}(D)$ is Cohen-Macaulay if and only if
	$a_i(S)=b_i(S)$ and $a_i(E)=b_i(E)$ for all $i=0, \ldots, m-1$.
\end{theorem}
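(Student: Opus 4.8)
The plan is to assemble the theorem directly from the three preparatory results that have already been established. By Corollary~\ref{CM-Gor}(1), the ring $\gr_{\mathfrak n}(D)$ is Cohen-Macaulay if and only if $\gr_{\mathfrak m}(R)$ is Cohen-Macaulay \emph{and} $\gr_{\mathfrak m}(I)$ is a maximal Cohen-Macaulay $\gr_{\mathfrak m}(R)$-module. So the strategy is to translate each of these two conditions into the numerical criteria on $S$ and on $E$ respectively, and then simply conjoin them.

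First I would handle the condition on $\gr_{\mathfrak m}(R)$. By Corollary~\ref{CM2} (equivalently Theorem~\ref{BaFr}), $\gr_{\mathfrak m}(R)$ is Cohen-Macaulay if and only if $a_i(S)=b_i(S)$ for all $i=0,\dots,m-1$. This gives the first half of the stated numerical condition verbatim. Next I would address the module condition. One subtlety is that Corollary~\ref{CM-Gor}(1) asks for $\gr_{\mathfrak m}(I)$ to be a \emph{maximal} Cohen-Macaulay module, whereas Proposition~\ref{ECM}(1) speaks of it being a one-dimensional Cohen-Macaulay module. These agree precisely because, as noted in the Remark preceding Proposition~\ref{ECM}, $\gr_{\mathfrak m}(I)$ has dimension one (it has positive dimension since $\fm^{n-1}I\neq\fm^nI$ for all $n$), and $\gr_{\mathfrak m}(R)$ has dimension one as well; hence a maximal Cohen-Macaulay $\gr_{\mathfrak m}(R)$-module is exactly a one-dimensional Cohen-Macaulay module. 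With this identification, Proposition~\ref{ECM} gives the equivalence of $\gr_{\mathfrak m}(I)$ being maximal Cohen-Macaulay with $a_i(E)=b_i(E)$ for all $i=0,\dots,m-1$.

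Putting the two pieces together, $\gr_{\mathfrak n}(D)$ is Cohen-Macaulay if and only if both $a_i(S)=b_i(S)$ and $a_i(E)=b_i(E)$ hold for all $i$, which is exactly the claimed statement. The main obstacle, such as it is, is not computational but conceptual: one must be sure that the dimension-theoretic hypotheses line up so that ``maximal Cohen-Macaulay module'' in Corollary~\ref{CM-Gor}(1) coincides with ``one-dimensional Cohen-Macaulay module'' in Proposition~\ref{ECM}(1). Since both rings are one-dimensional and $\gr_{\mathfrak m}(I)$ is known to have dimension one, this matching is immediate, and no further work is required. Thus the proof is essentially a one-line combination of Corollaries~\ref{CM-Gor} and \ref{CM2} with Proposition~\ref{ECM}.
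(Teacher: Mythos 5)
Your proof is correct and follows essentially the same route as the paper: combining Corollary~\ref{CM-Gor}(1) with Theorem~\ref{BaFr} (equivalently Corollary~\ref{CM2}) and Proposition~\ref{ECM}. In fact, you are slightly more careful than the paper in spelling out why ``maximal Cohen-Macaulay'' in Corollary~\ref{CM-Gor}(1) matches ``one-dimensional Cohen-Macaulay'' in Proposition~\ref{ECM}(1), a point the paper leaves implicit.
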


\begin{proof}
	By Corollary \ref{CM-Gor} (1) we have to give necessary and sufficient conditions for $\gr_{\mathfrak m}(R)$ and $\gr_{\mathfrak m}(I)$ to be Cohen-Macaulay. The thesis follows immediately by Theorem \ref{BaFr} and  Proposition \ref{ECM}.
\end{proof}

Let us explore some consequences of Proposition \ref{ECM}.

\begin{corollary}
	Let $\gr_{\mathfrak m}(I)$ be a Cohen-Macaulay $\gr_{\mathfrak m}(R)$-module. If $m\in E$, the minimal set of generators of $E$ is a subset of the minimal generating set of $S$.
\end{corollary}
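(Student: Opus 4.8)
The plan is to read the hypothesis through Proposition~\ref{ECM}: since $\gr_{\mathfrak m}(I)$ has dimension one, its Cohen-Macaulayness is equivalent to condition (4), namely $a_i(E)=b_i(E)$ for all $i$. I will then follow a single minimal generator $e$ of $E$ through the Ap\'ery sets of $E$, $S$ and the blowup $S'$, and show $e\notin 2M$, which is exactly the condition for $e$ to be a minimal generator of $S$.

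First I would dispose of the easy pieces. As $0\notin E\subseteq M$ and $m\in E$, the least generator is $e_1=m=n_1$, which is already a minimal generator of $S$. For any other minimal generator $e=e_i$ (with $i\geq 2$), minimality says $e\notin e_k+S$ for $k\neq i$; in particular $e-m=e-e_1\notin S$, and no relation $e\in lM+E$ with $l\geq 1$ can hold, since any such expression would place $e$ inside some $e_k+S$. Hence $\ord_E(e)=1$, and since $E+m\subseteq E$ gives $\Ap_{m}(E)=\{s\in E\mid s-m\notin E\}$, from $e-m\notin E$ I conclude $e=\alpha_j$ for the residue $j\equiv e \pmod m$.

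The core step is to convert this into information about $S$. From $b_j(E)=\ord_E(\alpha_j)=1$ together with $a_j(E)=b_j(E)$ I get $a_j(E)=1$, i.e. $\alpha'_j+m=\alpha_j=e$. Here the hypothesis $m\in E$ is crucial: it forces $E'=S'$, so $\alpha'_j=\omega'_j$ and therefore $e=m+\omega'_j$. Comparing with $\omega_j=\omega'_j+m\,a_j(S)$ and using $\alpha_j\geq\omega_j$ (because $E\subseteq S$) yields $a_j(S)\leq 1$; and $a_j(S)=0$ would give $e-m=\omega_j\in S$, contradicting $e-m\notin S$. Thus $a_j(S)=1$ and $e=\omega_j\in\Ap_{m}(S)$.

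Finally, the $S$-analogue of Remark~\ref{b less a} (if $\omega_j\in lM$ then $\omega_j-lm\in S'$, whence $a_j(S)\geq l$) gives $\ord_S(e)=b_j(S)\leq a_j(S)=1$, so $e\notin 2M$ and $e$ is a minimal generator of $S$. The main obstacle is precisely this core step: extracting, from the single numerical equality $a_j(E)=b_j(E)=1$ and the assumption $m\in E$, the identification of $e$ with the Ap\'ery element $\omega_j$ of $S$ having microinvariant $a_j(S)=1$; once that identification is in hand, the inequality $b\leq a$ closes the argument at once.
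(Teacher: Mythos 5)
Your proof is correct, but it takes a genuinely different route from the paper's. The paper's argument is a two-line contradiction via condition (3) of Proposition \ref{ECM} (equivalently Lemma \ref{ord E}): if a minimal generator $e$ of $E$ were not a minimal generator of $S$, write $e=s_1+s_2$ with $s_1,s_2\in M$; since $m\in E$, the element $e+m=s_1+s_2+m$ lies in $2M+E$, so $\ord_E(e+m)\geq 3$, while $\ord_E(e)=1$ because $e$ is a minimal generator of $E$, contradicting $\ord_E(e+m)=\ord_E(e)+1$. You instead read the hypothesis through condition (4), $a_i(E)=b_i(E)$, and work at the level of Ap\'ery sets and the blowup: you use $m\in E$ twice, once to get $e-m\notin S$ (minimality against $e_1=m$) and once to identify $E'=S'$ so that $\alpha'_j=\omega'_j$; from $a_j(E)=b_j(E)=1$ you deduce $e=\omega'_j+m$, the comparison $\alpha_j\geq\omega_j$ forces $a_j(S)=1$ and $e=\omega_j\in\Ap_m(S)$, and the standard inequality $b_j(S)\leq a_j(S)$ (the $S$-analogue of Remark \ref{b less a}) closes the argument. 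Every step checks out. What the paper's route buys is brevity: the hypothesis $m\in E$ is exploited exactly once, to jump the order of $e+m$ by two. What your route buys is finer structural information: it shows that each minimal generator $e\neq m$ of $E$ is in fact an Ap\'ery element $\omega_j$ of $S$ whose microinvariant satisfies $a_j(S)=1$, which is strictly more than the statement asks for.
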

\begin{proof}
	Assume, on the contrary, that the minimal generating set of $E$ has an element $e=s_1+s_2$  for two positive elements $s_1,s_2\in S$. Then, $\ord_E(e+m)\geq 3 > \ord_E(e)+1$ and this contradicts Lemma \ref{ord E}.
\end{proof}

Let $e\in E$ be such that $\ord_E(e)=n$; by a {\it maximal representation} of $e$ we mean a representation of the form
$e=\sum_i r_is_i+x$ such that $\sum_i r_i=n-1$ and $x\in E$. 

\begin{remark}\label{max-expression}
	Let $e\in E$ and $e=\sum_i r_is_i+x$ be a  maximal representation of $e$. If $x=a+s$ for some $a\in E$ and $s\in S$, then we get another representation $e=(\sum_i r_is_i+s)+a$ that implies $s=0$, as $\ord_E(e)=\sum_i r_i+1$. In other words, $x$ belongs to the minimal set of generators of $E$.
\end{remark}

\begin{corollary}\label{principal}
	Let $E$ be a principal ideal of $S$. If $\gr_{\mathfrak m}(R)$ is Cohen-Macaulay, then $\gr_{\mathfrak m}(I)$ is a Cohen-Macaulay $\gr_{\mathfrak m}(R)$-module. 
		In particular, if $E$ is a principal ideal, $\gr_{\mathfrak n}(D)$ is Cohen-Macaulay if and only if $\gr_{\mathfrak m}(R)$ is Cohen-Macaulay.
\end{corollary}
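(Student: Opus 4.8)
The plan is to reduce everything to the numerical order-function criteria already established. By Proposition~\ref{ECM}, $\gr_{\mathfrak m}(I)$ is a Cohen--Macaulay $\gr_{\mathfrak m}(R)$-module precisely when $\ord_E(e+m)=\ord_E(e)+1$ for every $e\in E$, while by Corollary~\ref{CM2} the Cohen--Macaulayness of $\gr_{\mathfrak m}(R)$ is exactly the analogous condition $\ord_S(s+m)=\ord_S(s)+1$ for every $s\in M$. So the whole statement will follow once I relate $\ord_E$ to $\ord_S$ using that $E$ is principal.

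Write $E=e_1+S$, so that every element of $E$ is of the form $e_1+s$ with $s\in S$. The key computation I would carry out is the identity
\[
\ord_E(e_1+s)=\ord_S(s)+1 \qquad \text{for all } s\in S,
\]
with the convention $\ord_S(0)=0$. To prove it I would unwind the definition $\ord_E(e_1+s)=\max\{l+1\mid e_1+s\in lM+E\}$ and observe that, since $E=e_1+S$, a membership $e_1+s\in lM+E$ forces $e_1+s=\mu+e_1+s'$ with $\mu\in lM$ and $s'\in S$, i.e. $s=\mu+s'\in lM+S$. Because $M$ is an ideal of $S$ one has $lM+S=lM$, so $e_1+s\in lM+E$ holds if and only if $s\in lM$; conversely $s\in lM$ gives $e_1+s\in lM+E$ by taking $e_1\in E$ itself. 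Hence $\max\{l+1\mid e_1+s\in lM+E\}=\max\{l\mid s\in lM\}+1=\ord_S(s)+1$ for $s\in M$, and the boundary value $s=0$ gives $\ord_E(e_1)=1$, matching $\ord_S(0)+1$.

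With this identity the first assertion is immediate. Any $e\in E$ is $e_1+s$ with $s\in S$, and then $e+m=e_1+(s+m)$, so the target condition $\ord_E(e+m)=\ord_E(e)+1$ becomes $\ord_S(s+m)+1=\ord_S(s)+1+1$, that is $\ord_S(s+m)=\ord_S(s)+1$. For $s\in M$ this is guaranteed by Corollary~\ref{CM2} once $\gr_{\mathfrak m}(R)$ is Cohen--Macaulay, and for $s=0$ it reduces to $\ord_S(m)=1$, which holds because $m$ is the multiplicity. Thus condition~(3) of Proposition~\ref{ECM} is satisfied and $\gr_{\mathfrak m}(I)$ is Cohen--Macaulay.

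Finally, the ``in particular'' statement follows from Corollary~\ref{CM-Gor}(1): if $\gr_{\mathfrak n}(D)$ is Cohen--Macaulay then so is $\gr_{\mathfrak m}(R)$, and conversely if $\gr_{\mathfrak m}(R)$ is Cohen--Macaulay then by the first part $\gr_{\mathfrak m}(I)$ is a one-dimensional, hence maximal, Cohen--Macaulay $\gr_{\mathfrak m}(R)$-module, so $\gr_{\mathfrak n}(D)$ is Cohen--Macaulay. I expect the only point requiring genuine care to be the order computation above, specifically justifying $lM+S=lM$ and handling the $s=0$ boundary case so that the identity $\ord_E(e_1+s)=\ord_S(s)+1$ holds uniformly; everything else is a formal substitution into the criteria of Proposition~\ref{ECM} and Corollary~\ref{CM2}.
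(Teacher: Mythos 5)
Your proof is correct, and while it has the same endpoints as the paper's proof --- both verify condition (3) of Proposition \ref{ECM} and then conclude via Corollary \ref{CM-Gor}(1) together with Corollary \ref{CM2} --- the key step is carried out by a genuinely different mechanism. The paper argues through maximal representations: by Remark \ref{max-expression}, in any maximal representation $a=\sum_i r_i n_i+x$ of $a\in E$ the tail $x$ must be the generator $e$, and the Cohen--Macaulay hypothesis on $\gr_\fm(R)$ is woven into the comparison of the maximal representations of $a$ and of $a+m$. You instead establish the unconditional identity $\ord_E(e_1+s)=\ord_S(s)+1$ for all $s\in S$, straight from the definition $\ord_E(e)=\max\{l+1\mid e\in lM+E\}$, using cancellation in $\mathbb{Z}$ and the equality $lM+S=lM$; the Cohen--Macaulay hypothesis enters only at the last moment through Corollary \ref{CM2}. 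This buys something real: your identity holds with no hypothesis whatsoever on $\gr_\fm(R)$, it isolates exactly what principality contributes (numerically it is the shadow of the module isomorphism $\gr_\fm(I)\cong\gr_\fm(R)(-1)$, which holds since $\fm^{n}I=t^{e_1}\fm^{n}$), and it immediately upgrades the first assertion to an equivalence --- for principal $E$, $\gr_\fm(I)$ is Cohen--Macaulay \emph{if and only if} $\gr_\fm(R)$ is --- because the condition $\ord_E(e+m)=\ord_E(e)+1$ for all $e\in E$ translates verbatim into $\ord_S(s+m)=\ord_S(s)+1$ for all $s\in S$. The paper's argument, in exchange, stays entirely within the representation-theoretic language (orders of maximal representations) that it reuses throughout Sections 2 and 3. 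Your explicit handling of the two boundary points, $lM+S=lM$ and the case $s=0$ (where $\ord_S(m)=1$ is needed), is precisely the care the argument requires, so there is no gap.
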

\begin{proof}
	Let $E=e+S$. If $a=\sum_i r_i n_i+x$ is a maximal representation of $a\in E$, then $x=e$ by Remark \ref{max-expression}. As $\gr_{\mathfrak m}(R)$ is Cohen-Macaulay, $\ord_S(\sum_i r_i n_i+m)=\sum_i r_i+1$ by Corollary \ref{CM2}. Let $a+m=\sum_i r'_i n_i+e$ be a maximal representation of $a+m\in E$. Then $\sum_i r'_i n_i=\sum_i r_in_i+m$ and, in particular, $\ord_E(a+m)=\ord_S(\sum_i r_in_i+m)+1=\ord_S(\sum_i r_in_i)+1+1=\ord_E(a)+1$. Now, the  Proposition~\ref{ECM} implies that $\gr_{\fm}(I)$ is a Cohen-Macaulay $\gr_{\mathfrak m}(R)$-module. The last statement follows by Corollary~\ref{CM-Gor}.
\end{proof}	

\begin{examples} \rm
{\bf 1.}	Consider the numerical semigroup $S=\langle 3,4 \rangle = \{0,3,4,6 \rightarrow \}$, the ideal $E=\{3,8\}+S= \{3,6 \rightarrow \}$, and the integer $b=3$; then $\du= \langle 6,8,9,19 \rangle = \{0,6,8,9,12,14 \rightarrow \}$. It is not difficult to see that
	$a(S)=b(S)=[0,1,2]$, but $a(\du)=[0,2,1,1,2,2]$ and $b(\du)=[0,1,1,1,2,2]$. Then $\gr_{\mathfrak n}(D)$ is not Cohen-Macaulay, although $\gr_{\mathfrak m}(R)$ is. In fact $a(E)=[1,2,2]$ and $b(E)=[1,2,1]$\\
{\bf 2.}	Consider the numerical semigroup $S=\langle 5,14,17 \rangle$, the ideal $E=\{14,20,22\}+S$ and $b=17$.
	In this case we have $\Ap_5(S)=\{0,31,17,28,14\}$, $\Ap_5(E)=\{0,20,31,22,28\}$ and $\Ap_{10}(\du)=\{0,61,62,73,34,45,56,57,28,79\}$. Moreover, it is possible to see that
	$a(S)=b(S)=[0,2,1,2,1]$ and $a(\du)=b(\du)=[0,1,2,2,1,1,2,1,1,2]$; then, both $\gr_{\mathfrak m}(R)$ and $\gr_{\mathfrak n}(D)$ are Cohen-Macaulay.
\end{examples}

\section{Gorenstein property}

In this section we will need to list the elements of  the Ap\'ery sets of $S$ and $E$, with respect to the multiplicity, in increasing order: hence, we will denote them by $\Ap_m(S)=\{\delta_1=0 < \delta_2 < \dots < \delta_m \}$ and $\Ap_m(E)=\{\beta_1 < \beta_2 < \dots < \beta_m\}$. It is straightforward to see that $\Ap_{2m}(\du)=\{2 \delta_1, \dots, 2 \delta_m, 2 \beta_{1}+b, \dots, 2 \beta_{m}+b\}$.

We define a partial ordering $\leq_{M}$ on $\Ap_m(S)$ by setting $\delta_i \leq_{M} \delta_j$ if there exists $\delta_k \in \Ap_m(S)$ such that $\delta_i + \delta_k = \delta_j$ and $\ord_S(\delta_i) + \ord_S(\delta_k)= \ord_S (\delta_j)$.
A numerical semigroup $S$ is said to be $M$-pure if all the maximal elements of $\Ap_m(S)$ with respect to $\leq_{M}$ have the same order. 
In \cite[Theorem 3.14]{B} L. Bryant proves that $\gr_\fm(R)$ is Gorenstein if and only if it is Cohen-Macaulay and $S$ is $M$-pure and symmetric.

	\begin{remark}\label{Bryant} \rm
		If $S$ is $M$-pure, then $\gr_\fm(R)$ is Cohen-Macaulay if and only if $\ord_S(\delta +\lambda m)=\ord_S(\delta)+ \lambda$ for all $\lambda \in \mathbb{N}$ and all maximal elements $\delta \in \Ap_m(S)$ with respect to $\leq_M$. Clearly, one implication follows from Corollary \ref{CM2}.
		Conversely, if $\delta_i \in \Ap_m(S)$, $\delta_i <_M \delta$ with $\delta$ maximal and $\delta_k=\delta -\delta_i$, then for all $\lambda \in \mathbb{N}$ we have
		\begin{equation*}
		\begin{split}
		\ord_S(\delta_i+\lambda m) \geq \ord_S (\delta_i)+ \lambda& = 
		\ord_S(\delta)+\lambda - \ord_S(\delta_{k})\\&=\ord_S(\delta + \lambda m)- \ord_S(\delta_{k})\\&= 
		\ord_S(\delta_i + \lambda m+\delta_{k})- \ord_S(\delta_{k})\\
		& \geq \ord_S(\delta_i + \lambda m)+\ord_S(\delta_k)- \ord_S(\delta_k)=\ord_S(\delta_i+\lambda m).
		\end{split}
		\end{equation*}
		Therefore, the first inequality is an equality and the claim follows from Corollary \ref{CM2}. In particular, if $S$ is $M$-pure and symmetric, then $\delta_m$ is the only maximal element by \cite[Proposition 3.7]{B} and, thus, $\gr_\fm(R)$ is Cohen-Macaulay if and only if $\ord_S(\delta_m +\lambda m)=\ord_S(\delta_m)+ \lambda$ for all $\lambda \in \mathbb{N}$.
	\end{remark}

Given $s \in S$, we say that a representation $s= \sum_i s_i n_i$ is maximal if $\sum_i s_i=\ord_S(s)$.

\begin{lemma} \label{orders}
For a given integer  $t$ in $T=\du$ with maximal representation  $t=\sum_i r_i (2n_i) + \sum_j s_j(2e_j+b)$, the following statements hold:
	\begin{enumerate} 
	\item If $t$ is odd, then $\sum_j s_j=1$; 
	\item If $t=2s$ is even, then $\sum_j s_j =0$ and $\ord_T(t)=\ord_S(s)$.
	\end{enumerate}
\end{lemma}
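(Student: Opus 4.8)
The plan is to exploit the fact that the two families of generators of $T=\du$ have opposite parities: each $2n_i$ is even while each $2e_j+b$ is odd (because $b$ is odd). Reducing the defining equation modulo $2$ gives $t\equiv\sum_j s_j\,b\equiv\sum_j s_j\pmod 2$, so $\sum_j s_j$ is odd exactly when $t$ is odd and even exactly when $t$ is even. Both assertions about $\sum_j s_j$ will then follow at once, provided I can show that a \emph{maximal} representation never uses more than one odd generator, i.e. $\sum_j s_j\le 1$.

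The heart of the matter, and the step I expect to carry the whole weight, is a pairing-off reduction. Suppose a representation had $\sum_j s_j\ge 2$; then it contains two odd generators $2e_j+b$ and $2e_k+b$ (with $j=k$ allowed), whose sum is
\[
(2e_j+b)+(2e_k+b)=2(e_j+e_k+b).
\]
Since $E$ is an ideal not containing $0$ we have $E\subseteq M$, and $b\in M$ as $b$ is a nonzero element of $S$; hence $e_j+e_k+b\in 3M$ and $\ord_S(e_j+e_k+b)\ge 3$. Choosing a maximal representation $e_j+e_k+b=\sum_i c_i n_i$ with $\sum_i c_i=\ord_S(e_j+e_k+b)\ge 3$, I replace the two chosen odd generators by $\sum_i c_i(2n_i)$. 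The resulting expression represents the same $t$ but has $\ord_T(t)-2+\sum_i c_i\ge \ord_T(t)+1$ summands, which exceeds $\ord_T(t)$ and contradicts maximality. Thus $\sum_j s_j\le 1$ in any maximal representation, and combined with the parity observation this gives $\sum_j s_j=1$ when $t$ is odd (statement (1)) and $\sum_j s_j=0$ when $t$ is even (the first half of statement (2)).

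It remains to establish the order equality in (2). Writing the even element as $t=2s$ and using $\sum_j s_j=0$, the maximal representation reduces to $2s=\sum_i r_i(2n_i)$; dividing by $2$ yields $s=\sum_i r_i n_i$, whence $\ord_S(s)\ge\sum_i r_i=\ord_T(t)$. Conversely, taking a maximal representation $s=\sum_i c_i n_i$ in $S$ and doubling it produces $2s=\sum_i c_i(2n_i)$, a valid representation of $t$ in $T$, so that $\ord_T(t)\ge\sum_i c_i=\ord_S(s)$. The two inequalities force $\ord_T(t)=\ord_S(s)$. The only genuinely nontrivial point in the whole argument is the pairing-off step, and it relies solely on the elementary inequality $\ord_S(e_j+e_k+b)\ge 3$; everything else is bookkeeping on the number of summands.
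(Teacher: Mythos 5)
Your proof is correct and takes essentially the same route as the paper's: the crucial step in both is the pairing-off of two odd generators, $(2e_j+b)+(2e_k+b)=2(e_j+e_k+b)$, into at least three nonzero summands of $T$, contradicting maximality, followed by the same two-inequality argument for $\ord_T(2s)=\ord_S(s)$. The only (cosmetic) difference is that the paper replaces the pair directly by the three elements $2e_j$, $2e_k$, $2b$ of $T$, while you re-expand $e_j+e_k+b$ into a maximal representation by generators of $S$ and double it.
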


\begin{proof}
	 Suppose that $\sum_j s_j \geq 2$, i.e. in the maximal representation of $t$ there are two elements of the form $2e_{j_1} + b$ and $2e_{j_2} + b$, not necessarily different. Then, it is possible to replace them with $2e_{j_1}$, $2e_{j_2}$ and $2b$ that are three elements of $T$. In this way we increase the summands in the representation and this is a contradiction, since it is maximal. 
	 Hence, if $t$ is odd, then $\sum_j s_j=1$, whereas if $t$ is even we have $\sum_j s_j=0$. In the latter case $s=\sum_i r_i n_i$ and, thus, $\ord_S(s) \geq \ord_T (t)$. Moreover, if $\sum_k p_k n_k$ is a maximal representation of $s$, then $t=\sum_k p_k (2n_k)$ and the thesis follows immediately. 	
\end{proof}

\begin{remark}\label{odd order}
	In the setting of the previous lemma, if $t$ is odd and  $t=\sum_i r_i (2n_i) + (2e+b)$ is a maximal representation, $e$ is necessarily a minimal generator of $E$, otherwise $e=e'+s$ and we increase the summands in the representation. Moreover, setting $s=\sum_i r_i n_i$, we have that $\ord_T(t)\geq \ord_S(s)+1$ and, conversely, if  $s=\sum_i r_i n_i$ is not a maximal representation, the same holds for
	the representation of $t$; hence, $\ord_T(t)= \ord_S(s)+1$.
	
	On the other hand, if we only assume that $t=2s+(2e+b)$, with $e$ minimal generator of $E$, we cannot conclude that $\ord_T(t)= \ord_S(s)+1$, since it could be $t=2s+(2e+b)=2s'+(2e'+b)$ with $\ord_S(s)<
	\ord_S(s')$.
\end{remark}

Let $f(S)=\max(\mathbb{Z} \setminus S)$. The {\it standard canonical ideal} of $S$ is defined as 
\[K(S)=\{x \in \mathbb{N} \mid f(S)-x \notin S\}.\]
It is characterized by the following duality property: $K(S)-(K(S)-F)=F$, for any relative ideal $F$ of $S$;
the same property holds for any shift $x+K(S)$. Starting by this fact, that is the numerical 
counterpart of the duality for canonical ideals in the one-dimensional rings, J\"ager proved in \cite[Satz 5]{J} that 
any fractional ideal of $R=k[\![S]\!]$, with valuation $x+K(S)$, is a canonical (fractional) ideal of $R$.
In particular, the monomial ideal corresponding to a proper ideal $E=K(S)+x \subseteq S$, with $x \in S$, provides a canonical ideal of $R$; consequently, $E$ is called a {\it canonical ideal} of $S$.
 A numerical semigroup is said to be {\it symmetric} if $S=K(S)$ and this notion is the corresponding one of the Gorenstein property in numerical semigroup theory; more precisely, $R$ is Gorenstein if and only if $S$ is symmetric.
We recall also that $S$ is symmetric if and only if $\delta_i + \delta_{m-i+1} = \delta_m$ for all $i=2, \dots, m-1$, see \cite[Proposition 4.10]{RG}.

\begin{proposition} \label{Mpure}
	Let $E$ be a canonical ideal of $S$. Then, $T=\du$ is $M$-pure if and only if $S$ is $M$-pure. Moreover, in this case $\ord_T(2\beta_m+b)=\ord_S(\delta_m)+1$.
\end{proposition}

\begin{proof}
	If $E=K(S)+x$, it is clear that $\Ap_{m}(E)=\Ap_{m}(K(S))+x$ and, thus, $\beta_1=x$ and $\beta_{m}=f(S)+m + x=\delta_{m}+x$.
	Furthermore, since $T$ is symmetric by \cite[Proposition 3.1]{DS}, it easily follows that $2 \delta_i + 2\beta_{m+1-i}+b=2 \beta_{m} + b$ for all $i=2, \dots, m$. 
	
	 Assume that $T$ is $M$-pure. Since $T$ is symmetric, this means that $\ord_T(2 \delta_i) + \ord_T(2\beta_{m+1-i}+b)=\ord_T(2 \beta_{m} + b)$ for all $i$ and, in particular, 
	$$
	\ord_T(2 \beta_m+b)=\ord_T(2 \delta_m)+\ord_T(2x+b)=\ord_S(\delta_m)+1,
	$$
since $2x+b$ is the smallest odd element of $T$. Let $\delta_i$ be a maximal element of $\Ap_m(S)$ with respect to $\leq_{M(S)}$ and let $\sum_j r_j (2n_j) +(2e+b)$ be a maximal representation of $2\beta_{m+1-i}+b$ in $T$. Suppose by contradiction that $\sum_j r_j >0$. Clearly, $\delta_i + \sum_j r_j n_j = \beta_m-e \in S$ and we claim that $\beta_m-e \in \Ap_m(S)$: in fact, $\beta_m-e-m=\delta_m-(e-x)-m=f(S)-k$ for some $k \in K(S)$ and, thus, it is not in $S$ by definition of $K(S)$. Moreover, since $2(\beta_m-e)+2e+b=2\beta_m+b$ and $2e+b$ is a minimal generator of $T$ which is $M$-pure, it follows that $\ord_T(2\beta_m+b)=\ord_S(\beta_m-e)+1$. Furthermore, we already know that 
	$$
	\ord_T(2 \beta_m+b)=\ord_T(2\delta_i)+\ord_T\left(\sum_j r_j (2n_j) + (2e+b)\right)= \ord_S (\delta_i)+\sum_j r_j +1.
	$$ 
	Hence, $\ord_S(\delta_i)+ \sum_j r_j=\ord_S(\beta_m-e)$ and this is a contradiction, since $\delta_i$ is maximal with respect to $\leq_{M(S)}$. 
	Consequently, we get $\sum_j r_j=0$ and, therefore,  
	$$
	\ord_S(\delta_m)+1=\ord_T(2\beta_m+b)=\ord_T(2\delta_i)+\ord_T(2e+b)=\ord_S(\delta_i)+1.
	$$
	This implies that $\ord_S(\delta_i)=\ord_S(\delta_m)$ and, hence, $S$ is $M$-pure.
	
	Conversely, assume that $S$ is $M$-pure. 
	First of all we claim that $\ord_T(2\beta_m+b)=\ord_S(\delta_m)+1$. Let $\sum_j r_j (2n_j) + (2e+b)$ be a maximal representation of $2\beta_m+b$ in $T$, again $e$ has to be a minimal generator of $S$ and it is easy to see that $\sum_j r_j \in \Ap_m(S)$. Therefore, 
	$$
	\ord_T(2\beta_m+b)=\sum_j r_j+1 = \ord_S \left(\sum_j r_j n_j \right)+1 \leq \ord_S(\delta_m)+1,
	$$
	where the inequality follows from the fact that $S$ is $M$-pure and $\delta_m$ is a maximal element of $S$. Moreover, we know that $2\delta_m + 2\beta_1+b = 2\beta_m + b$ and then $\ord_T(2\beta_m+b)\geq \ord_S(\delta_m)+1$; hence, we get the equality.
	
	It is enough to show that $2\beta_m+b$ is the only maximal elements of $\Ap_{2m}(T)$ with respect of $\leq_{M(T)}$. Since $T$ is symmetric, we know that $2 \delta_i + 2\beta_{m+1-i}+b=2 \beta_{m} + b$ and, then, it is sufficient to show that $2\delta_i \leq_{M(T)} 2 \beta_m + b$ for all $i=1, \dots, m$. Moreover, if $\delta_i$ is not maximal in $\Ap_m(S)$, it is clear that it is not maximal in $\Ap_{2m}(T)$ and, thus, it is enough to consider the elements $2 \delta_i$ with $\delta_i$ maximal in $\Ap_m(S)$. Finally, from $2 \delta_i + 2 \beta_{m+1-i}+b= 2\beta_m+b$ it follows that 
	$$
	\ord_S(\delta_m)+1=\ord_T (2\beta_m+b) \geq \ord_T(2\delta_i)+\ord_T(2\beta_{m+1-i}+b) \geq \ord_S(\delta_i)+1=\ord_S(\delta_m)+1
	$$
	for every maximal element $\delta_i$ in $\Ap_m(S)$. Hence, $\ord_{T}(2\beta_m+b) = \ord_T(2\delta_i)+\ord_T(2\beta_{m+1-i}+b)$ and $2\delta_i \leq_{M(T)} 2\beta_m+b$.
\end{proof}

\begin{example}
	If $E$ is not a canonical ideal, both the implications of the previous proposition do not hold. For instance $S=\langle 5,6,7 \rangle$ is $M$-pure, but $S \! \Join^5 \! (\{5,13\}+S)= \langle 10,12,14,15,31 \rangle$ is not $M$-pure. On the other hand, $S=\langle 6,7,22 \rangle$ is not $M$-pure, but $S \! \Join^7 \! (\{6,21\}+S)=\langle 12,14,19,44,49 \rangle$ is $M$-pure.
\end{example}

	\begin{theorem} \label{Gorenstein}
		Let $D=k[\![\du]\!]$ and let $\fn$ be its maximal ideal. Then $\gr_\fn(D)$ is Gorenstein if and only if $S$ is $M$-pure, $E$ is a canonical ideal and $\gr_\fm(R)$ is Cohen-Macaulay. 
	\end{theorem}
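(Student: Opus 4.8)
The plan is to apply Bryant's criterion \cite[Theorem~3.14]{B} not to $S$ but to the numerical semigroup $T=\du$ itself. Since $D=k[\![\du]\!]$ is a numerical semigroup ring with multiplicity $2m$, that criterion says that $\gr_\fn(D)$ is Gorenstein if and only if $\gr_\fn(D)$ is Cohen--Macaulay, $T$ is symmetric, and $T$ is $M$-pure (with respect to $\Ap_{2m}(T)$ and $\leq_{M(T)}$). I would then translate each of these three conditions on $T$ back into conditions on $S$ and $E$, using the results already established for the duplication.

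The two ``shape'' conditions translate immediately. The semigroup $T$ is symmetric exactly when $E$ is a canonical ideal of $S$: one direction is \cite[Proposition~3.1]{DS}, while for the other one uses that $\gr_\fn(D)$ Gorenstein forces $D$ Gorenstein, hence $I$, and therefore $E$, canonical, as observed after Corollary~\ref{CM-Gor}. Once $E$ is canonical, Proposition~\ref{Mpure} gives that $T$ is $M$-pure if and only if $S$ is $M$-pure. This already settles the implication $(\Rightarrow)$: if $\gr_\fn(D)$ is Gorenstein, then $T$ is symmetric and $M$-pure and $\gr_\fn(D)$ is Cohen--Macaulay, whence $E$ is canonical, $S$ is $M$-pure, and $\gr_\fm(R)$ is Cohen--Macaulay by Corollary~\ref{CM-Gor}(1).

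For $(\Leftarrow)$ the only substantial point is to prove that, under the hypotheses $S$ $M$-pure, $E$ canonical and $\gr_\fm(R)$ Cohen--Macaulay, the ring $\gr_\fn(D)$ is automatically Cohen--Macaulay; Bryant's criterion then concludes, since $T$ is already symmetric and $M$-pure. As $T$ is $M$-pure and symmetric, \cite[Proposition~3.7]{B} shows that $\Ap_{2m}(T)$ has a unique $\leq_{M(T)}$-maximal element, namely its largest element $2\beta_m+b$ (largest because $E=K(S)+x$ forces $\beta_m=\delta_m+x\geq\delta_m$). Hence Remark~\ref{Bryant}, applied to $T$, reduces the Cohen--Macaulayness of $\gr_\fn(D)$ to the equalities $\ord_T\!\left(2(\beta_m+\lambda m)+b\right)=\ord_T(2\beta_m+b)+\lambda$ for all $\lambda\in\NN$, which by Proposition~\ref{Mpure} read $\ord_T\!\left(2(\beta_m+\lambda m)+b\right)=\ord_S(\delta_m)+\lambda+1$.

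Proving this family of equalities is the main obstacle, and I would argue by induction on $\lambda$, the case $\lambda=0$ being exactly Proposition~\ref{Mpure}. The inequality $\geq$ is automatic since $2m$ is the multiplicity of $T$. For $\leq$, consider a maximal representation of the odd element $2(\beta_m+\lambda m)+b$; by Lemma~\ref{orders} and Remark~\ref{odd order} it has the form $\sum_j r_j(2n_j)+(2e+b)$ with $e$ a minimal generator of $E$ and $T$-order equal to $\ord_S(w)+1$, where $w=\sum_j r_j n_j$ and $\beta_m+\lambda m=w+e$. If this order exceeded $\ord_S(\delta_m)+\lambda+1$, then $\ord_S(w)>\ord_S(\delta_m)+\lambda\geq\ord_S(\delta_m)$; since every element of $\Ap_m(S)$ has order at most $\ord_S(\delta_m)$ by $M$-purity, this forces $w\notin\Ap_m(S)$, i.e.\ $w-m\in S$. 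As $\gr_\fm(R)$ is Cohen--Macaulay we have $\ord_S(w)=\ord_S(w-m)+1$, so $w$ admits a maximal representation using the generator $m$, and therefore $2(\beta_m+\lambda m)+b$ admits a maximal representation containing the summand $2m$. Deleting that summand produces a representation of $2(\beta_m+(\lambda-1)m)+b$ of length $\ord_S(w)>\ord_S(\delta_m)+\lambda$, contradicting the inductive hypothesis. This propagation-of-orders argument, which genuinely uses both the Cohen--Macaulay and the $M$-pure hypotheses on $S$, is the delicate step; once it is in place, Bryant's criterion for $T$ assembles the three translated conditions into the statement.
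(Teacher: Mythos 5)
Your proof is correct and, in its overall architecture, it is the same as the paper's: both directions go through Bryant's criterion \cite[Theorem 3.14]{B}; your forward implication is exactly the paper's argument ($T=\du$ symmetric forces $E$ canonical, then Proposition \ref{Mpure} and Corollary \ref{CM-Gor}); and your converse performs the paper's reduction, via \cite[Proposition 3.1]{DS}, Proposition \ref{Mpure} and Remark \ref{Bryant}, to the equalities $\ord_T(2\beta_m+b+2\lambda m)=\ord_S(\delta_m)+\lambda+1$ for all $\lambda\in\NN$. The two proofs part ways only in how this family of equalities is established. The paper argues directly, with no induction: from a maximal representation $\sum_i r_i(2n_i)+(2e+b)$ it chooses $\gamma$ with $w-\gamma m\in\Ap_m(S)$, where $w=\sum_i r_i n_i$, rules out $\gamma>\lambda$ because otherwise $2(w-\gamma m)+2e+b$ would be an element of $T$ smaller than, and congruent modulo $2m$ to, the Ap\'ery element $2\beta_m+b$, and then bounds $\ord_S(w)=\ord_S(w-\gamma m)+\gamma\leq\ord_S(\delta_m)+\lambda$ using Corollary \ref{CM2} and $M$-purity. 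You replace this by an induction on $\lambda$: assuming the order is too large, $M$-purity forces $w\notin\Ap_m(S)$, Cohen--Macaulayness of $\gr_\fm(R)$ produces a maximal representation of $w$ containing the summand $m$, hence a maximal representation of the odd element containing $2m$, and deleting $2m$ contradicts the inductive hypothesis. The ingredients are identical (Lemma \ref{orders}, Remark \ref{odd order}, the bound $\ord_S(\delta_i)\leq\ord_S(\delta_m)$ on $\Ap_m(S)$, Corollary \ref{CM2}); the trade-off is that the paper needs the extra observation that $2\beta_m+b$ lies in $\Ap_{2m}(T)$ in order to cap the number of $m$'s one can subtract, whereas your induction gets that control for free from the inductive hypothesis at $\lambda-1$. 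One minor remark: your step ``every element of $\Ap_m(S)$ has order at most $\ord_S(\delta_m)$ by $M$-purity'' also uses the (automatic) fact that the largest Ap\'ery element $\delta_m$ is $\leq_M$-maximal; this matters because $S$ itself need not be symmetric here, so \cite[Proposition 3.7]{B} cannot be invoked for $S$, only for $T$ --- which is exactly how you used it.
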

	
	\begin{proof}
	Recall that $\gr_\fn(D)$ is Gorenstein if and only if it is Cohen-Macaulay and $\du$ is $M$-pure and symmetric \cite[Theorem 3.14]{B}. 
	
	Assume that $\gr_\fn(D)$ is Gorenstein. By \cite[Theorem 3.14]{B} this is equivalent to say 
	that it is Cohen-Macaulay and $\du$ is $M$-pure and symmetric. Since $\du$ is symmetric,
	$E$ has to be a canonical ideal of $S$ (\cite[Proposition 3.1]{DS}). So we can apply the previous proposition to obtain that $S$ is $M$-pure. Finally, by Corollary \ref{CM-Gor} we get that also 
	$\gr_\fm(R)$ is Cohen-Macaulay.
	
	Conversely, assume that $S$ is $M$-pure, $E$ is a canonical ideal and $\gr_\fm(R)$ is Cohen-Macaulay.
	Hence Proposition \ref{Mpure} and \cite[Proposition 3.1]{DS} imply that $\du$ is $M$-pure
	and symmetric. Therefore we have only to show that $\gr_\fn(D)$ is Cohen-Macaulay.
	By Remark \ref{Bryant} it is enough to show that $\ord_T(2\beta_m+b+ 2\lambda m)=
	\ord_T(2\beta_m+b)+\lambda$ for all $\lambda \in \mathbb{N}$.
	Since $\gr_\fm(R)$ is Cohen-Macaulay and $\ord_T(2\beta_m+b)=\ord_S(\delta_m)+1$ (again by Proposition \ref{Mpure}),
	it is enough to show that $\ord_T(2\beta_m+b+ 2\lambda m) = \ord_S(\delta_m+\lambda m)+1$ for all $\lambda \in \mathbb{N}$. If $E=x+K(S)$, then $\beta_m=\delta_m+x$ and
		\[\ord_T(2\beta_m+b+ 2\lambda m)=\ord_T(2\delta_m+ 2\lambda m+2x+b)\geq \ord_S(\delta_m+\lambda m)+1.
		\]
		Let $2\beta_m+b + 2 \lambda m= \sum r_i (2n_i) + (2e+b)$ be a maximal representation, where $e$ is a minimal generator of $E$. Clearly, there exists $\gamma \in \mathbb{N}$ such that $\sum r_i n_i - \gamma m \in \Ap_m(S)$. If $\gamma > \lambda$, then 
		$2\beta_m+b > 2 (\sum r_i n_i - \gamma m) + 2e+b \in T$ and, since they are congruent module $2m$, this yields a contradiction; hence, $\gamma \leq \lambda$.
		Moreover, $\ord_S(\delta_m)$ is the maximum order of the elements of $\Ap_m(S)$, so using Corollary \ref{CM2}, it follows that
		\begin{equation*}
		\begin{split}
		\ord_T(2\beta_m+b+ 2\lambda m)&=\ord_S\left(\sum r_i n_i\right)+1=\ord_S\left(\sum r_i n_i-\gamma m\right) + \gamma +1 \leq \\
		&\leq  \ord_S(\delta_m)+\lambda +1 = \ord_S(\delta_m+\lambda m)+1. \qedhere
		\end{split}
		\end{equation*}
		
	\end{proof}

	
	
	The next result is a consequence of Corollary \ref{CM-Gor}, since a Gorenstein ring is Cohen-Macaulay.
		
	\begin{corollary}
If $E$ is a canonical ideal of an $M$-pure numerical semigroup $S$ and $\gr_\fm(R)$ is Cohen-Macaulay, then $\gr_{\fm}(I)$ is Cohen-Macaulay. 
	\end{corollary}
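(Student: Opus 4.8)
The plan is to deduce the statement purely from the characterizations already established, using the Gorenstein property of the numerical duplication as a bridge. The three hypotheses---that $S$ is $M$-pure, that $E$ is a canonical ideal of $S$, and that $\gr_\fm(R)$ is Cohen-Macaulay---are precisely the conditions appearing in Theorem \ref{Gorenstein}. Thus the first step is to invoke Theorem \ref{Gorenstein} verbatim to conclude that $\gr_\fn(D)$ is Gorenstein.

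Next I would use the standard implication that every Gorenstein ring is Cohen-Macaulay, so that $\gr_\fn(D)$ is in particular Cohen-Macaulay. This is the only non-formal input, and it requires no computation. With this in hand, I would apply Corollary \ref{CM-Gor}(1), which states that $\gr_\fn(D)$ is Cohen-Macaulay exactly when $\gr_\fm(R)$ is Cohen-Macaulay and $\gr_\fm(I)$ is a maximal Cohen-Macaulay $\gr_\fm(R)$-module. Since $\gr_\fm(R)$ is Cohen-Macaulay by hypothesis and $\gr_\fn(D)$ is Cohen-Macaulay by the previous step, the corollary forces $\gr_\fm(I)$ to be a maximal Cohen-Macaulay module. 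Because $\gr_\fm(I)$ is one-dimensional (as recorded in the remark preceding Proposition \ref{ECM}), being maximal Cohen-Macaulay coincides with being a one-dimensional Cohen-Macaulay module, which is the desired conclusion.

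There is essentially no obstacle to overcome: the result is a formal consequence of Theorem \ref{Gorenstein} and Corollary \ref{CM-Gor}(1), combined with the elementary fact that Gorenstein implies Cohen-Macaulay. The substantive work has already been done in establishing Proposition \ref{Mpure} and Theorem \ref{Gorenstein}; here the only point meriting a remark is the harmless identification of \emph{maximal} Cohen-Macaulay (as phrased in Corollary \ref{CM-Gor}(1)) with \emph{one-dimensional} Cohen-Macaulay (as phrased in Proposition \ref{ECM}), valid because $\gr_\fm(I)$ has dimension one.
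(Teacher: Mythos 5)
Your proposal is correct and follows exactly the paper's own route: the paper derives this corollary from Theorem \ref{Gorenstein} (giving that $\gr_\fn(D)$ is Gorenstein, hence Cohen--Macaulay) together with Corollary \ref{CM-Gor}(1), which is precisely your argument. Your closing remark identifying \emph{maximal} Cohen--Macaulay with \emph{one-dimensional} Cohen--Macaulay for $\gr_\fm(I)$ is a harmless clarification the paper leaves implicit.
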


Recalling that a principal ideal is canonical if and only if the ring is Gorenstein, we get the following corollary.

\begin{corollary}
	$\gr_\fm(R)$ is Gorenstein if and only if $\gr_\fn(k[\![\du]\!])$ is Gorenstein for every principal ideal $E$ of $S$. 
\end{corollary}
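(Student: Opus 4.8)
The plan is to deduce this directly from Theorem \ref{Gorenstein} together with Bryant's criterion \cite[Theorem 3.14]{B}, recalled just before that theorem, which states that $\gr_\fm(R)$ is Gorenstein if and only if it is Cohen-Macaulay and $S$ is $M$-pure and symmetric. The bridge between the two sides is precisely the recalled fact that a principal ideal $E=e+S$ of $S$ is a canonical ideal if and only if $R$ is Gorenstein, i.e. if and only if $S$ is symmetric.

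First I would treat the forward implication. Assuming $\gr_\fm(R)$ is Gorenstein, Bryant's criterion gives at once that $\gr_\fm(R)$ is Cohen-Macaulay and that $S$ is both $M$-pure and symmetric. Let $E=e+S$ be any principal ideal of $S$ with $e>0$, so that $0\notin E$ and $E$ is a proper ideal in the sense of our standing convention $E\neq S$. Since $S$ is symmetric, $E$ is a canonical ideal. Thus $S$ is $M$-pure, $E$ is canonical and $\gr_\fm(R)$ is Cohen-Macaulay, so Theorem \ref{Gorenstein} yields that $\gr_\fn(k[\![\du]\!])$ is Gorenstein. As $E$ was an arbitrary principal ideal, this holds for all of them.

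For the converse, it suffices to exhibit a single principal ideal and run Theorem \ref{Gorenstein} backwards. Take, for instance, $E=m+S$, which is a principal ideal not containing $0$. By hypothesis $\gr_\fn(k[\![\du]\!])$ is Gorenstein, so Theorem \ref{Gorenstein} guarantees that $S$ is $M$-pure, that $E$ is a canonical ideal and that $\gr_\fm(R)$ is Cohen-Macaulay. Since $E$ is a principal canonical ideal, $S$ is symmetric. Collecting these facts, namely that $\gr_\fm(R)$ is Cohen-Macaulay and $S$ is $M$-pure and symmetric, Bryant's criterion gives that $\gr_\fm(R)$ is Gorenstein.

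I expect no genuine obstacle here, since the corollary is a formal consequence of the already established Theorem \ref{Gorenstein}. The only subtleties worth flagging are the asymmetry between the two directions, in that the forward direction must be checked for every principal $E$ (which is automatic once $S$ is known to be symmetric) whereas the converse needs only one principal ideal, and the minor point that one should choose $e>0$ so that the selected principal ideal is proper.
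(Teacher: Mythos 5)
Your proof is correct and follows exactly the paper's route: the paper derives this corollary from Theorem \ref{Gorenstein} together with the observation that a principal ideal is canonical if and only if $S$ is symmetric, plus Bryant's criterion, which is precisely your argument spelled out in both directions. The only difference is that the paper compresses this into a single sentence, while you make explicit the (correct) asymmetry that the converse needs just one principal ideal.
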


\begin{example}
	Consider the numerical semigroup $S=\langle 10,11,12,13 \rangle$. This is $M$-pure and its associated graded ring is Cohen-Macaulay, but $S$ is not symmetric and, therefore, $\gr_{\fn}(R)$ is not Gorenstein. On the other hand, if $E$ is a canonical ideal of $S$, $\gr_\fn(k[\![\du]\!])$ is Gorenstein for all odd $b \in S$ by Theorem \ref{Gorenstein}. For instance, $E=\{10,11,12\}+S$ is a canonical ideal and in this case $S \! \Join^{11} \! E=\langle 20,22,24,26,31,33,35 \rangle$.  
\end{example}

As in the example above, there are several cases in which $\gr_\fm(R)$ is not Gorenstein, but $\gr_\fn(k[\![\du]\!])$ is.

	\begin{corollary} {\rm (\cite[Proposition 3.22]{B})}
		Let $E$ be a canonical ideal of $S$. If $S$ satisfies one of the following conditions, then $\gr_\fn(k[\![\du]\!])$ is Gorenstein.
		\begin{enumerate}
			\item $S$ has embedding dimension $2$.
			\item $S$ has maximal embedding dimension, i.e. $\nu=m$.
			\item $S$ is symmetric and $\nu=m-1$.
			\item $S$ is generated by an arithmetic sequence.
			\item $m \leq 4$, except the case $S=\langle 4, n_2, n_3 \rangle$ such that $S$ is not symmetric.
		\end{enumerate}
	\end{corollary}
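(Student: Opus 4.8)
The plan is to reduce the entire statement to Theorem \ref{Gorenstein}. Since $E$ is assumed to be a canonical ideal of $S$, that theorem asserts that $\gr_\fn(k[\![\du]\!])$ is Gorenstein \emph{if and only if} $S$ is $M$-pure and $\gr_\fm(R)$ is Cohen-Macaulay. Thus the whole corollary collapses to a single claim about $S$ alone: each of the five conditions (1)--(5) forces $S$ to be $M$-pure and its tangent cone $\gr_\fm(R)$ to be Cohen-Macaulay. The verification of this reduced claim is exactly the content of \cite[Proposition 3.22]{B}, so the core of the argument is to confirm that these two properties hold in each family.

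I would proceed case by case, reading off the relevant facts from $\Ap_m(S)$. The cleanest is (2): maximal embedding dimension means $\Ap_m(S)\setminus\{0\}$ consists precisely of the minimal generators $n_2,\dots,n_m$, so every nonzero Apéry element has order $1$; hence all of them are maximal with respect to $\leq_M$ and share the common order $1$, giving $M$-purity at once, while Cohen-Macaulayness of the tangent cone of a maximal embedding dimension ring is classical (and can be read off from Corollary \ref{CM2}, since $\ord_S(\delta+\lambda m)=\ord_S(\delta)+\lambda$ is immediate when $\delta$ has order $1$). For (1) the ring is a two-generated complete intersection, automatically symmetric and with Cohen-Macaulay tangent cone, and $M$-purity follows from the single defining relation. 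Cases (4) and (5) are handled by the explicit descriptions of the Apéry sets of semigroups generated by an arithmetic sequence and of semigroups with $m\le 4$: in each one checks directly that the maximal elements of $\Ap_m(S)$ carry a common order and that $\ord_S(\delta+\lambda m)=\ord_S(\delta)+\lambda$ holds for those maximal $\delta$, which by Remark \ref{Bryant} and Corollary \ref{CM2} yields $M$-purity and Cohen-Macaulayness simultaneously. Case (3) already carries the symmetry hypothesis and presents no extra difficulty.

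The hard part will be the following subtlety. \cite[Proposition 3.22]{B} is phrased as a criterion for $\gr_\fm(R)$ \emph{itself} to be Gorenstein, and hence, through Bryant's Theorem 3.14, it is built on the extra assumption that $S$ is symmetric. Here, by contrast, the symmetry that Theorem 3.14 demands is supplied for free on the duplication side: when $E$ is canonical, $\du$ is symmetric by \cite[Proposition 3.1]{DS}. Consequently what I actually need to extract from Bryant's argument are only the $M$-purity and Cohen-Macaulayness conclusions about $S$, stripped of the symmetry hypothesis. The delicate point is therefore to check that, in the proof of \cite[Proposition 3.22]{B}, those two properties are genuinely established from the shape of $\Ap_m(S)$ without invoking that $S$ is symmetric — which is indeed so for (2), (4), (5), while (1) is symmetric automatically and (3) by hypothesis. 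Once this is confirmed, Theorem \ref{Gorenstein} closes the argument.
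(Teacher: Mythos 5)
Your reduction is exactly the paper's intended argument: the paper gives no separate proof of this corollary, obtaining it by combining Theorem \ref{Gorenstein} (once $E$ is canonical, Gorensteinness of $\gr_\fn(k[\![\du]\!])$ reduces to $M$-purity of $S$ together with Cohen--Macaulayness of $\gr_\fm(R)$) with \cite[Proposition 3.22]{B}, which supplies precisely these two properties under conditions (1)--(5). Your case-by-case sketches and your observation that symmetry of $S$ itself is never needed --- it is replaced by symmetry of $\du$, automatic for canonical $E$ by \cite[Proposition 3.1]{DS} --- match the paper's reasoning, so the proposal is correct and essentially identical in approach.
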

	
	For instance, if $S$ has maximal embedding dimension and $m \neq 2$, then it is not symmetric and so $\gr_\fm(R)$ is not Gorenstein, but $\gr_\fn(k[\![\du]\!])$ is Gorenstein for every odd $b \in S$ and every canonical ideal $E$ of $S$.
	
Let $\omega_R$ be a canonical module of $R$. By Corollary \ref{idealization}, it follows that $\gr_\fn(k[\![\du]\!])$ is Gorenstein if and only if $\gr_\fm(\omega_R)$ is a canonical module of $\gr_\fm(R)$. In general, in \cite[Theorem 3.5]{Oo} A. Ooishi proves that, if $R$ is local, the associated graded module of $\omega_R$ is a canonical module of $\gr_\fm(R)$ exactly when $\gr_\fm(R)$ and $\gr_\fm(\omega_R)$ are Cohen-Macaulay and $H(\gr_\fm(\omega_R),t)=(-1)^d t^a H(\gr_\fm(R),t^{-1})$, where $H(M,t)$ denotes the Hilbert series of a module $M$, $d=\dim R$ and $a$ is the $a$-invariant of $\gr_\fm(R)$.

In the light of Theorem \ref{Gorenstein}, in the case of numerical semigroup rings, this technical condition can be replaced by the  $M$-pureness for $S$:

\begin{corollary} \label{canonical module}
	Let $R=k[\![S]\!]$ be a numerical semigroup ring with canonical module $\omega_R$. Then, $\gr_\fm(\omega_R)$ is a canonical module of $\gr_\fm(R)$ if and only if $\gr_\fm(R)$ is Cohen-Macaulay and $S$ is $M$-pure.
\end{corollary}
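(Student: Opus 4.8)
The plan is to reduce the statement to the Gorenstein characterization of the numerical duplication already obtained in Theorem \ref{Gorenstein}, by choosing $E$ to be a canonical ideal so that the monomial ideal $I$ represents the canonical module $\omega_R$. First I would fix a canonical ideal $E=x+K(S)\subseteq S$ with $x\in S$, and let $I=(t^{e}\mid e\in E)$ be the corresponding monomial ideal; realizing $\omega_R$ as the monomial fractional ideal with value set $K(S)$, we then have $I=t^{x}\omega_R$. Since multiplication by $t^{x}$ satisfies $t^{x}(\fm^{n}\omega_R)=\fm^{n}(t^{x}\omega_R)=\fm^{n}I$ for every $n$, it is an $R$-linear filtered isomorphism $\omega_R\to I$, hence it induces an isomorphism $\gr_\fm(\omega_R)\cong\gr_\fm(I)$ of graded $\gr_\fm(R)$-modules (up to a degree shift, which is irrelevant for the property of being a canonical module, as is the choice of the fractional-ideal representative of $\omega_R$). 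Consequently, $\gr_\fm(\omega_R)$ is a canonical module of $\gr_\fm(R)$ if and only if $\gr_\fm(I)$ is.

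Next I would chain the available equivalences. For any odd $b\in S$, set $D=k[\![\du]\!]$. By Corollary \ref{CM-Gor}(2), the ring $\gr_\fn(D)$ is Gorenstein if and only if $\gr_\fm(R)$ is Cohen-Macaulay and $\gr_\fm(I)$ is a canonical module of $\gr_\fm(R)$; moreover, by Ooishi's theorem \cite[Theorem 3.5]{Oo}, $\gr_\fm(\omega_R)$ can be a canonical module of $\gr_\fm(R)$ only when $\gr_\fm(R)$ is Cohen-Macaulay, so the Cohen-Macaulayness of $\gr_\fm(R)$ is already encoded in the statement ``$\gr_\fm(\omega_R)$ is a canonical module of $\gr_\fm(R)$''. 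In view of the identification of the previous paragraph, this statement is therefore equivalent to ``$\gr_\fn(D)$ is Gorenstein''. Finally, since $E$ was chosen to be a canonical ideal, Theorem \ref{Gorenstein} asserts that $\gr_\fn(D)$ is Gorenstein if and only if $S$ is $M$-pure and $\gr_\fm(R)$ is Cohen-Macaulay. Combining the two equivalences yields exactly the claim.

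The hard part is the bookkeeping in the first paragraph: carefully identifying $\gr_\fm(I)$ with $\gr_\fm(\omega_R)$ and checking that ``being a canonical module of $\gr_\fm(R)$'' is insensitive both to a degree shift and to the choice of representative of $\omega_R$ within its isomorphism class of fractional ideals. Once this identification is secured, the corollary is a purely formal consequence of Corollary \ref{CM-Gor}(2) and Theorem \ref{Gorenstein}, together with the observation (via Ooishi) that the Cohen-Macaulayness of $\gr_\fm(R)$ is built into both sides of the asserted equivalence.
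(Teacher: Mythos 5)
Your proposal is correct and takes essentially the same route as the paper: the paper's own proof also fixes a canonical ideal $E$, invokes Corollary \ref{CM-Gor}(2) to translate ``$\gr_\fm(\omega_R)$ is a canonical module of $\gr_\fm(R)$'' into ``$\gr_\fn(k[\![\du]\!])$ is Gorenstein'', and then concludes by Theorem \ref{Gorenstein}, with the Cohen--Macaulayness of $\gr_\fm(R)$ assumed at the outset (justified by Ooishi's theorem, exactly as you argue). Your first paragraph, the filtered isomorphism $t^x\colon \omega_R\to I$ identifying $\gr_\fm(\omega_R)$ with $\gr_\fm(I)$ independently of shifts and of the representative of $\omega_R$, simply makes explicit a bookkeeping step that the paper leaves implicit.
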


\begin{proof}
	We can assume that $\gr_\fm(R)$ is Cohen-Macaulay. By Corollary \ref{CM-Gor}, $\gr_\fm(\omega_R)$ is a canonical module of $\gr_\fm(R)$ if and only if $\gr_\fn(k[\![S\!\Join^b \!E]\!])$ is Gorenstein, where $E$ is a canonical ideal of $S$ and $b$ an odd element of $S$. Then, the thesis follows from Theorem \ref{Gorenstein}.
\end{proof}

\subsection{Complete intersections}	In this subsection we focus to the complete intersection property for $\gr_\fn(k[\![\du]\!])$. We will make use of a characterization proved in \cite{DMS}: $\gr_\fm(R)$ is a complete intersection if and only if it is Cohen-Macaulay and $S$ has $\gamma$-rectangular Ap\'ery set. To explain this result we need some notation. For every $i=2, \dots, \nu$ we define
\begin{align*}
\beta_S(n_i) =  \max \{ h \in \mathbb{N} \mid & \, hn_i \in \Ap_{m}(S) {\rm \ and \ } \ord(hn_i)=h \}; \\
\gamma_S(n_i)=  \max \{ h \in \mathbb{N} \mid &\, hn_i \in \Ap_{m}(S), \, \ord(hn_i)=h {\rm \ and \ } \\
& \, hn_i {\rm \ has \ a \ unique \ maximal \ representation }\}.
\end{align*}
(we write $\beta_S(n_i)$ instead of $\beta_i$ as in \cite{DMS}, since in that paper the generators are 
listed in increasing order, while the ordering of the generators of $T=\du$ could not be clear).
Let $B(S)=\{\sum_2^\nu \lambda_i n_i \mid 0 \leq \lambda_i \leq \beta_S(n_i)\}$ and $\Gamma(S)=\{\sum_2^\nu \lambda_i n_i \mid 0 \leq \lambda_i \leq \gamma_S(n_i)\}$. In \cite[Corollary 2.7]{DMS} it is proved that $\Ap_{m}(S) \subseteq \Gamma (S) \subseteq B(S)$ and we will say that a numerical semigroup has $\beta$- or $\gamma$-rectangular Ap\'ery set if $\Ap_{m}(S)=B(S)$ or $\Ap_{m}(S)=\Gamma(S)$, respectively. 

\begin{proposition} \label{complete intersection} 
	Let $T=\du$ and $D=k[\![\du]\!]$.
	\begin{enumerate}
	\item $T$ has $\gamma$-rectangular (resp. $\beta$-rectangular) Ap\'ery set if and only if $S$ has $\gamma$-rectangular (resp. $\beta$-rectangular) Ap\'ery set and $E$ is principal; 
	\item $\gr_\fn(D)$ is complete intersection if and only if $\gr_\fm(k[\![S]\!])$ is complete intersection and $E$ is principal.
	\end{enumerate}	
\end{proposition}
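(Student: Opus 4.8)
The plan is to reduce the whole statement to an explicit description of the invariants $\beta_T$ and $\gamma_T$ attached to the minimal generators of $T=\du$, and then to compare the sets $B(T)$ and $\Gamma(T)$ with $\Ap_{2m}(T)$ parity by parity. Recall that the multiplicity of $T$ is $2m=2n_1$, that the remaining generators are $2n_2,\dots,2n_\nu$ together with $2e_1+b,\dots,2e_r+b$, and that $\Ap_{2m}(T)=2\Ap_m(S)\cup\bigl(2\Ap_m(E)+b\bigr)$ splits into an even part $2\Ap_m(S)$ and an odd part $2\Ap_m(E)+b$. First I would compute the invariants. For an even generator $2n_i$, Lemma~\ref{orders}(2) says $\ord_T(2s)=\ord_S(s)$ and that maximal representations of even elements use only even generators; hence $h\,(2n_i)\in\Ap_{2m}(T)$ with $\ord_T=h$ (resp.\ with unique maximal representation) if and only if $hn_i\in\Ap_m(S)$ with $\ord_S=h$ (resp.\ unique), so $\beta_T(2n_i)=\beta_S(n_i)$ and $\gamma_T(2n_i)=\gamma_S(n_i)$. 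For an odd generator $2e_j+b$ one checks it is a minimal generator of $T$ lying in $\Ap_{2m}(T)$ (otherwise $e_j-m\in E$, contradicting minimality of $e_j$ in $E$); and since any maximal representation in $T$ contains at most one odd generator by Lemma~\ref{orders}, for $h\geq 2$ the representation $h(2e_j+b)$ is not maximal, so $\ord_T\bigl(h(2e_j+b)\bigr)>h$. Therefore $\beta_T(2e_j+b)=\gamma_T(2e_j+b)=1$, and we obtain $\Gamma(T)=\{\sum_{i=2}^\nu \lambda_i(2n_i)+\sum_{j=1}^r \mu_j(2e_j+b)\mid 0\le\lambda_i\le\gamma_S(n_i),\ \mu_j\in\{0,1\}\}$, and the analogous formula for $B(T)$.

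Next comes the comparison, which I would carry out for the $\gamma$-case; the $\beta$-case is identical after replacing $\Gamma,\gamma_S$ by $B,\beta_S$. For the converse implication, if $E=e_1+S$ is principal then $\Ap_m(E)=e_1+\Ap_m(S)$, so assuming $S$ is $\gamma$-rectangular the formula above collapses to $\Gamma(T)=2\Ap_m(S)\cup\bigl(2(e_1+\Ap_m(S))+b\bigr)=\Ap_{2m}(T)$, i.e.\ $T$ is $\gamma$-rectangular. For the direct implication, suppose $\Gamma(T)=\Ap_{2m}(T)$ and split by parity. Taking all $\mu_j=0$ shows $2\Gamma(S)\subseteq\Gamma(T)$, whose even part is $2\Ap_m(S)$; hence $2\Gamma(S)\subseteq 2\Ap_m(S)$, and with $\Ap_m(S)\subseteq\Gamma(S)$ from \cite{DMS} this gives $\Gamma(S)=\Ap_m(S)$, so $S$ is $\gamma$-rectangular. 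For the odd part, every $\delta\in\Ap_m(S)\subseteq\Gamma(S)$ and every $j$ produce $2(\delta+e_j)+b\in\Gamma(T)=\Ap_{2m}(T)$, forcing $\delta+e_j\in\Ap_m(E)$; thus $\Ap_m(S)+e_j\subseteq\Ap_m(E)$, and since both sides meet every residue class modulo $m$ exactly once they are equal. If $r\geq 2$ this yields $\Ap_m(S)+e_1=\Ap_m(S)+e_2$, and evaluating at $0\in\Ap_m(S)$ gives $e_1-e_2\in\Ap_m(S)\subseteq\NN$, contradicting $e_1<e_2$. Hence $E$ is principal, proving part (1).

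Finally, part (2) follows by assembling (1) with the characterization of \cite{DMS} that $\gr$ is a complete intersection exactly when it is Cohen-Macaulay with $\gamma$-rectangular Ap\'ery set. If $\gr_\fn(D)$ is a complete intersection, then $T$ is $\gamma$-rectangular and $\gr_\fn(D)$ is Cohen-Macaulay, so by (1) $E$ is principal and $S$ is $\gamma$-rectangular, while the Theorem of Section~2 gives that $\gr_\fm(R)$ is Cohen-Macaulay; hence $\gr_\fm(R)$ is a complete intersection. Conversely, if $\gr_\fm(R)$ is a complete intersection and $E$ is principal, then $S$ is $\gamma$-rectangular and $\gr_\fm(R)$ is Cohen-Macaulay, so $T$ is $\gamma$-rectangular by (1) and $\gr_\fn(D)$ is Cohen-Macaulay by Corollary~\ref{principal}; therefore $\gr_\fn(D)$ is a complete intersection. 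I expect the main obstacle to be the odd-generator computation $\gamma_T(2e_j+b)=1$ together with the verification that $2e_j+b$ is a minimal generator of $T$: these are precisely the points where the odd parity of $b$ and Lemma~\ref{orders} are essential, and they are what ultimately force $E$ to be principal through the residue-class counting in the odd part.
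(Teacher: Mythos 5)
Your proof is correct, and most of it coincides with the paper's own argument: the computation $\beta_T(2n_i)=\beta_S(n_i)$, $\gamma_T(2n_i)=\gamma_S(n_i)$ via Lemma \ref{orders}, the computation $\beta_T(2e_j+b)=\gamma_T(2e_j+b)=1$ from the fact that a maximal representation contains at most one odd generator, and the whole of part (2) (the complete intersection criterion of \cite{DMS} combined with Corollary \ref{principal} and the Cohen--Macaulay transfer between $\gr_\fn(D)$ and $\gr_\fm(R)$) are exactly the paper's steps. Where you genuinely diverge is in how rectangularity of $T$ is matched with ``$S$ rectangular and $E$ principal'': the paper finishes by counting, claiming that the even and the odd elements of $\Gamma(T)$ each number at least $|\Gamma(S)|$ with equality precisely when $E$ is principal, so that $|\Ap_{2m}(T)|=2|\Ap_m(S)|\leq 2|\Gamma(S)|\leq |\Gamma(T)|$ and rectangularity of $T$ forces all inequalities to be equalities; you instead compare the sets parity by parity, getting $\Gamma(S)=\Ap_m(S)$ from the even part, and from the odd part the inclusion $\Ap_m(S)+e_j\subseteq\Ap_m(E)$, hence equality by residue-class counting, after which $r\geq 2$ would give $e_1-e_2\in\Ap_m(S)\subseteq\NN$, contradicting $e_1<e_2$. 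Your route is a bit longer but makes explicit a point the paper leaves implicit (that a non-principal $E$ contributes strictly new even and odd elements to $\Gamma(T)$, which is what makes the paper's inequalities strict), and it produces the structural identity $\Ap_m(E)=e_j+\Ap_m(S)$ for every minimal generator $e_j$ as a by-product; the paper's counting argument is shorter and disposes of the $\beta$- and $\gamma$-cases with the same sentence. Both arguments rest on the same inputs, namely Lemma \ref{orders} and the inclusions $\Ap_m(S)\subseteq\Gamma(S)\subseteq B(S)$ from \cite{DMS}.
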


\begin{proof} 
	(1) It easily follows from Lemma \ref{orders} that $\beta_T(2n_i)=\beta_S(n_i)$ and $\gamma_T(2n_i)=\gamma_S(n_i)$ for all $i=2, \dots, \nu$. On the other hand, if $2e_i+b$ is a generator of $T$, then $\beta_T(2e_i+b)=\gamma_T(2e_i+b)=1$, since $2(2e_i+b)=2e_i + 2e_i + 2b$ has order greater than $2$. This implies that the number of the even elements in $\Gamma(T)$ is greater than $|\Gamma(S)|$ and equal to it if and only if $E$ is principal; in the same way, the number of the odd elements in $\Gamma(T)$ is greater than $|\Gamma(S)|$ and they are equal if and only if $E$ is principal. Therefore,
	$$
	|\Ap_{2m}(T)|=2|\Ap_{m}(S)| \leq 2 |\Gamma(S)| \leq |\Gamma(T)|
	$$  
	and, hence, the Ap\'ery set of $T$ is $\gamma$-rectangular if and only if all the inequalities above are equalities, i.e. $S$ has $\gamma$-rectangular Ap\'ery set and $E$ is principal.
	As for the $\beta$-rectangularity, it is enough to apply the same argument. \\ 
	(2) Recall that the associated graded ring of a numerical semigroup ring is complete intersection if and only if it is Cohen-Macaulay and the Ap\'ery set of the semigroup is $\gamma$-rectangular, see \cite[Theorem 3.6]{DMS}. Therefore, the thesis follows from the first part of this proposition and by Corollary \ref{principal}.
\end{proof}

\section{Homogeneous property}

Consider the surjective homomorphism $\varphi: k[\![x_1, \dots, x_\nu]\!] \rightarrow k[\![S]\!]$ that associates the minimal generators $n_i$ with every $x_i$. It induces an isomorphism between $k[\![S]\!]$ and $k[[x_1, \dots ,x_\nu]/I_S$ for some ideal $I_S$, called the defining ideal of $S$. In this section we are interested in comparing the Betti numbers of $k[\![S]\!]$ and $\gr_\fm(k[\![S]\!])$ as $k[\![x_1, \dots, x_\nu]\!]$-modules.
It is well-known that the $i$-th  Betti number $\beta_i(k[\![S]\!])$ of $k[\![S]\!]$ is less than or equal to $\beta_i(\gr_\fm(k[\![S]\!]))$ for all $i$; if all the equalities hold, i.e. $k[\![S]\!]$ and $\gr_\fm(k[\![S]\!])$ have the same Betti numbers, $k[\![S]\!]$ (and $S$) is said to be of homogeneous type \cite{HRV}. In \cite{JZ} it is introduced the notion of homogeneous semigroup in order to find numerical conditions assuring that $k[\![S]\!]$ is of homogeneous type. In fact, it is proved that if $S$ is homogeneous and $\gr_\fm(k[\![S]\!])$ is Cohen-Macaulay, then $k[\![S]\!]$ is of homogeneous type.

In this section we characterize when the numerical duplication $\du$ is 
homogeneous in terms of $S$, $E$ and $b$.
Let $\mathcal{L}(z)$ be the set of the lenghts of the representations of an integer $z$. 
In the following we recall the definition of homogeneous numerical semigroup introduced in \cite{JZ} 
and we generalize it to ideals.

\begin{definition} Let $E=\{e_1, \dots, e_r\}+S$ be an ideal of $S$ and let $\Ap_{m}(E)=\{\beta_1, \beta_2, \dots, \beta_{m} \}$ be the  Ap\'ery set of $E$ with respect to the multiplicity.
\begin{enumerate}
	\item An integer $z$ is called homogeneous for $S$ if either $z \notin S$ or $\mathcal{L}(z)$ is a singleton. 
	\item The semigroup $S$ is said to be homogeneous if every element of $\Ap_{m}(S)$ is homogeneous. 
	\item The ideal $E$ is called homogeneous if $\beta_i - e_j$ is homogeneous for all $i,j$ 
	and all the non-empty sets among 
	$\mathcal{L}(\beta_i - e_1)$, $\mathcal{L}(\beta_i - e_2)$, $\dots$, $\mathcal{L}(\beta_i - e_r)$ 
	are equal for all $i$.
\end{enumerate}
\end{definition}

\begin{examples} \label{No arrows}
{\bf 1.} Let $S_1=\langle 4,5 \rangle$ be a numerical semigroup and consider 
the ideal $E_1=\{5,8\}+S$ of $S_1$. The Ap\'ery set of $E_1$ is $\Ap_4(E_1)=\{5,8,10,15\}$ and it is 
easy to see that $E_1$ is homogeneous. \\
{\bf 2.} Consider again $S_1= \langle 4,5 \rangle$ and $E_2=\{5\}+S$. 
We have $\Ap_4(E_2)=\{5,10,15,20\}$ and one can easily check that it is homogeneous. \\ 
{\bf 3.} Let $S_2= \langle 6,7,9,11 \rangle$ and consider its ideal $E_3=\{7,11,12\}+S$. 
In this case we have $\Ap_6(E_3)=\{7,11,12,14,16,21\}$ and  $E_3$ is not homogeneous, 
since $9=21-12$ and $14=21-7$ have different order.
\end{examples}

If every element of $\Ap_m(E)$ is homogeneous and its generators have the same order as elements of $S$, 
it is easy to see that $E$ is homogeneous. The converse is not true as the first two examples show. 

\begin{lemma} \label{3E+b}
Let $\Ap_{m}(S)=\{\delta_1, \dots, \delta_{m}\}$ and $\Ap_{m}(E)=\{\beta_1, \dots, \beta_{m} \}$.
If $\delta_i \notin 2E+b$ for every $i$, then $\beta_j \notin 3E+b$ for every $j$.
\end{lemma}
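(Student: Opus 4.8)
The plan is to prove the contrapositive: assuming that some $\beta_j\in\Ap_m(E)$ lies in $3E+b$, I will exhibit an element of $\Ap_m(S)$ lying in $2E+b$ (here, following the paper's sumset notation, $2E=E+E$ and $3E=E+E+E$, so that $2E+b=(E+E)+b$ and $3E+b=(E+E+E)+b$). So suppose $\beta_j\in 3E+b$ and fix a decomposition $\beta_j=f_1+f_2+f_3+b$ with $f_1,f_2,f_3\in E$.

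The key move is to drop exactly one of the three summands coming from $E$ and set $x:=f_1+f_2+b$. Since $f_1+f_2\in S$ and $b\in S$ we have $x\in S$, and by construction $x\in 2E+b$; moreover $\beta_j=x+f_3$. It then suffices to show that $x$ is in fact an Ap\'ery element of $S$, because then $x\in\Ap_m(S)\cap(2E+b)$ and we are done. To see $x\in\Ap_m(S)$ I will use the characterization $\Ap_m(S)=\{s\in S\mid s-m\notin S\}$ together with the minimality encoded in $\beta_j\in\Ap_m(E)$. Suppose, for contradiction, that $x-m\in S$. Since $E$ is an ideal of $S$, i.e. $S+E\subseteq E$, this would give $\beta_j-m=(x-m)+f_3\in S+E\subseteq E$; but $\beta_j$ is by definition the smallest element of $E$ in its residue class modulo $m$, so $\beta_j-m\notin E$, a contradiction. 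Hence $x-m\notin S$, so $x\in\Ap_m(S)$, which completes the contrapositive and hence the proof.

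I expect no genuine obstacle here beyond identifying the auxiliary element $x$: removing a single ideal summand $f_3$ forces $x$ into $2E+b$, while re-attaching $f_3$ (an element of the ideal $E$) to a hypothetical $x-m\in S$ would push $\beta_j-m$ back into $E$ and violate the minimality of $\beta_j$ in $\Ap_m(E)$. It is worth noting that no property of the odd element $b$ beyond $b\in S$ is used, and nothing about the scaling in $\du$ enters; the statement is really a purely combinatorial fact about the ideal $E$ and the interaction of $\Ap_m(E)$ and $\Ap_m(S)$, which is why the short descent-by-one-summand argument suffices rather than an iterated descent by multiples of $m$.
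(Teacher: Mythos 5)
Your proof is correct and is essentially the paper's own argument: both hinge on dropping one $E$-summand from $\beta_j=f_1+f_2+f_3+b$ to form the auxiliary element $f_1+f_2+b\in S\cap(2E+b)$, and both use the characterization $\Ap_m(S)=\{s\in S \mid s-m\notin S\}$ together with the ideal property $E+S\subseteq E$ to contradict the minimality of $\beta_j$ in $\Ap_m(E)$. The only difference is presentational: the paper argues by direct contradiction (assuming the hypothesis on $\Ap_m(S)$ and $\beta_j\in 3E+b$ simultaneously), whereas you phrase the same descent as a contrapositive.
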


\begin{proof}
Assume by contradiction that $\beta_j=a_1+a_2+a_3+b$, for some $j$ with $a_i \in E$. 
By hypothesis $a_2+a_3+b \notin \Ap_m(S)$, then $a_2+a_3+b-m \in S$ and, thus, 
$\beta_j-m=a_1+(a_2+a_3+b-m) \in E$, that is a contradiction. 
\end{proof}

The next theorem shows the importance of the homogeneous property for semigroup ideals.

\begin{theorem} \label{homogeneous duplication}
	Let $\Ap_{m}(S)=\{\delta_1, \dots, \delta_{m}\}$ and 
	$\Ap_{m}(E)=\{\beta_1, \dots, \beta_{m} \}$. 
	Then, $\du$ is homogeneous if and only if $S$ is homogeneous, 
	$E$ is homogeneous and $\delta_i \notin 2 E +b$ for all $i$. 
\end{theorem}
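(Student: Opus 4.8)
The plan is to decide homogeneity of $T=\du$ element by element on $\Ap_{2m}(T)=\{2\delta_1,\dots,2\delta_m,\,2\beta_1+b,\dots,2\beta_m+b\}$, by classifying the representations of each such element in the minimal generators $2n_1,\dots,2n_\nu,\,2e_1+b,\dots,2e_r+b$ according to how many \emph{odd} generators $2e_l+b$ they use; a parity count shows this number matches the parity of the element. First I would record the two basic translations. If a representation of the even element $2\delta_i$ uses $2h$ odd generators, then collecting odd parts and halving gives $\delta_i=\sum_k r_kn_k+\sum_{t=1}^{2h}e_{l_t}+hb$, and for $h\ge 1$ one may regroup this as $\bigl(\sum_k r_kn_k+e_{l_1}\bigr)+\bigl(e_{l_2}+\dots+e_{l_{2h}}+(h-1)b\bigr)+b\in E+E+b=2E+b$. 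Hence, when $\delta_i\notin 2E+b$, every representation of $2\delta_i$ uses no odd generator, and halving yields a length-preserving bijection with the representations of $\delta_i$ in $S$. Dually, a representation of the odd element $2\beta_j+b$ using $2h+1$ odd generators gives $\beta_j=\sum_k r_kn_k+\sum_{t=1}^{2h+1}e_{l_t}+hb$, which for $h\ge 1$ lies in $E+E+E+b=3E+b$; under the standing assumption $\delta_i\notin 2E+b$, Lemma~\ref{3E+b} rules this out, so every representation of $2\beta_j+b$ uses exactly one odd generator $2e_l+b$, and subtracting it and halving gives a bijection (length shifted by $1$) with the representations of $\beta_j-e_l$ in $S$.

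For the ``if'' direction, assume $S$ and $E$ are homogeneous and $\delta_i\notin 2E+b$ for all $i$. By the first translation, the lengths of the representations of $2\delta_i$ are exactly the lengths of those of $\delta_i$ in $S$; since $\delta_i\in\Ap_m(S)$ is homogeneous, these coincide, so $2\delta_i$ is homogeneous. By the second translation, the set of lengths of the representations of $2\beta_j+b$ is $\bigcup_l\{1+\ell\mid \ell\in\mathcal{L}(\beta_j-e_l)\}$, the union over those minimal generators $e_l$ of $E$ with $\beta_j-e_l\in S$. The homogeneity of $E$ says precisely that each $\mathcal{L}(\beta_j-e_l)$ is a singleton and that all the non-empty ones agree, so this union is a singleton and $2\beta_j+b$ is homogeneous. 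Thus every element of $\Ap_{2m}(T)$ is homogeneous, i.e. $T$ is homogeneous.

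For the ``only if'' direction, assume $T$ is homogeneous. The key first step is to deduce $\delta_i\notin 2E+b$ for all $i$, since this unlocks both bijections above. By Lemma~\ref{orders}(2) a maximal representation of the even element $2\delta_i$ uses no odd generator and has length $\ord_S(\delta_i)$; consequently any representation that uses an odd generator is non-maximal, hence strictly shorter. Were $\delta_i=a+a'+b\in 2E+b$ with $a,a'\in E$, then $2\delta_i=(2a+b)+(2a'+b)$ would provide a representation with exactly two odd generators, of length $<\ord_S(\delta_i)$; together with a maximal representation this gives two distinct lengths, contradicting the homogeneity of $2\delta_i$. With $\delta_i\notin 2E+b$ for all $i$ in hand, the bijections apply: homogeneity of each $2\delta_i$ forces all representations of $\delta_i$ in $S$ to share one length, so $S$ is homogeneous; and homogeneity of each $2\beta_j+b$ forces $\bigcup_l\{1+\ell\mid \ell\in\mathcal{L}(\beta_j-e_l)\}$ to be a singleton, which is exactly the definition of $E$ being homogeneous.

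The step I expect to be the main obstacle is the necessity of the condition $\delta_i\notin 2E+b$ in the ``only if'' direction: one must turn membership $\delta_i\in 2E+b$ into two genuinely different representation lengths of $2\delta_i$, and the clean tool for this is Lemma~\ref{orders}(2), which certifies that the maximal length of an even element is attained only by representations avoiding the odd generators. The rest is the bookkeeping of the two length-preserving (resp.\ length-shifted) bijections and keeping track of which differences $\beta_j-e_l$ actually belong to $S$.
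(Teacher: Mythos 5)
Your proof is correct and takes essentially the same route as the paper: the same parity count on the number of odd generators, the same regrouping of several odd generators into membership in $2E+b$ (resp.\ $3E+b$, excluded via Lemma~\ref{3E+b}), and the same use of Lemma~\ref{orders}(2) to produce two representation lengths for $2\delta_i$ when $\delta_i\in 2E+b$. The only difference is organizational: you establish $\delta_i\notin 2E+b$ first and then read off the homogeneity of $S$ and $E$ from the resulting length-preserving (resp.\ length-shifted) bijections, whereas the paper deduces homogeneity of $S$ and $E$ directly from the doubling maps and proves the non-membership condition last.
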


\begin{proof}
 Recall that 
	$\Ap_{2m}(\du)=\{2 \delta_1, \dots, 2\delta_{m}, 2\beta_1 +b, \dots, 2 \beta_{m} + b\}$.
	Assume first that $\du$ is homogeneous. 
	If $\sum_{j}r_j n_j$ and $\sum_k s_k n_k$ are two representations of 
	$\delta_i$, then $\sum_{j}r_j (2n_j)$ and $\sum_k s_k (2n_k)$ are two representations of $2 \delta_i$; 
	therefore, $\sum_{j}r_j =\sum_k s_k$ since $\du$ is homogeneous, and so $S$ is homogeneous as well.
	
	Let $\sum_{k}r_k n_k$ and $\sum_l s_l n_l$ be two representations of 
	$\beta_i - e_j$. Then, $2 \beta_i +b= 2(\beta_i-e_j)+2e_j+b$ has the two representations 
	$\sum_{k}r_k (2n_k) + 2e_j+b$ and $\sum_l s_l (2n_l)+2e_j+b$. 
	Since $2e_j+b$ is a minimal generator of $\du$, 
	the lengths of the representations above are $1+\sum_{k}r_k$ 
	and $1+\sum_l s_l$; thus, $\sum_{k}r_k=\sum_l s_l$, because $\du$ is homogeneous. 
	Moreover, if $\beta_i-e_{j_1}=\sum_k r_k n_k$ and $\beta_i - e_{j_2}= \sum_l s_ln_l$, 
	as above we get $2\beta_i +b =\sum_k r_k (2n_k)+2e_{j_1}+b=\sum_l s_l(2n_l)+2e_{j_2}+b$ 
	and again $\sum_k r_k=\sum_l s_l$; hence, $E$ is homogeneous.
	
	Now assume by contradiction that $\delta_i \in 2E+b$ for some $i$, 
	i.e. $\delta_i=\sum_j r_j n_j + \sum_k s_k e_k +b$ with $\sum_k s_k \geq 2$; 
	since $e_k \in S$, we can assume that $\sum_k s_k = 2$ and, thus, 
	$\delta_i= \sum r_j n_j + e_{k_1}+e_{k_2}+b$. 
	Therefore, $2 \delta_i= \sum_j r_j(2n_j) + 2e_{k_1}+b + 2e_{k_2}+b$ and, consequently, 
	$2 \delta_i$ has a representation in $\du$ of length $\sum_j r_j +2$; on the other hand, 
	$\ord_{S \Join^{b} E} 2\delta_i = \ord_S \delta_i \geq \sum_j r_j +3$ and this yields a contradiction.
	
	
	Conversely, assume that $S$ is homogeneous, $E$ is homogeneous and 
	$\delta_i \notin 2E+b$ for every $i$. 
	Let $2 \delta_i = \sum_j r_j (2n_j)+ \sum_k s_k (2e_k+b)$. 
	Clearly, $\sum_k s_k$ has to be even, since $b$ is odd. If $\sum_k s_k >0$, it follows that 
	$$\delta_i= \sum_j r_j n_j + \sum_k s_k e_k + \frac{\sum_k s_k}{2}b$$ 
	and, since $\sum_k s_k \geq 2$, we get $\delta_i \in 2E+b$; contradiction. 
	Hence, $2 \delta_i= \sum_j r_j(2n_j)$. Let $\sum_k r_k' (2n_k)$ be another representation of $2\delta_i$. 
	Then, $\delta_i = \sum_j r_j n_j=\sum_k r_k' n_k$ and, since $S$ is homogeneous, 
	it follows that $\sum_j r_j = \sum_k r_k'$.
	
	Now let $2 \beta_i+b = \sum_j r_j (2n_j) + \sum_k s_k (2e_k+b)$. Here $\sum_k s_k$ is odd and 
	suppose by contradiction that $\sum_k s_k \geq 3$. Then 
	$$
	\beta_i= \sum_j r_j n_j + \sum_k s_k e_k + \frac{(\sum_k s_k)-1}{2}b
	$$
	and, thus, $\beta_i \in 3E+b$, that contradicts Lemma \ref{3E+b}.
	Therefore, $2 \beta_i+b = \sum_j r_j (2n_j) + 2e_{k_{1}} + b$. 
	Consequently, if $\sum_l r'_l (2n_l) + 2e_{k_2}+b$ is another representation of $2 \beta_i +b$, 
	it immediately follows that $\sum_j r_j = \sum_l r'_l$, since $E$ is homogeneous. 
\end{proof}

The conditions $\delta_i \notin 2 E +b$ are true for $b \gg 0$, 
consequently if $S \! \Join^{\overline b} \! E$ is homogeneous for some $\overline{b}$, 
then $\du$ is always homogeneous for $b \gg 0$.

\begin{example}
	The ideals $E_1=\{5,8\}+S$ and $E_2=\{5\}+S$ of Examples \ref{No arrows} 
	are both homogeneous and $S_1=\langle 4,5 \rangle$ is homogeneous. 
	The Ap\'ery set of $S_1$  is $\Ap_4(S)=\{0,5,10,15\}$ and, 
	since $15$ is both in $2E_1 + 5$ and $2E_2+5$, 
	it follows that $S_1 \! \Join^5 \! E_1=\langle 8,10,15,21 \rangle$ 
	and $S_1 \! \Join^5 \! E_2=\langle 8,10,15 \rangle$ are not homogeneous. 
	On the other hand, the smallest element of $2E_1+b$ and $2E_2+b$ is $10+b$, 
	thus, if $b>5$, the elements of $\Ap_4(S)$ are neither in $2E_1+b$ nor in $2E_2+b$; 
	it follows that both $S_1 \! \Join^b \! E_1$ and $S_1 \! \Join^b \! E_2$ are 
	homogeneous for any odd $b \in S$ except $5$. 
	For instance $S_1 \! \Join^9 \! E_1=\langle 8,10,23,29 \rangle$ and 
	$S_1 \! \Join^9 \! E_2= \langle 8,10,23 \rangle$ are homogeneous.
\end{example}

	On the one hand Theorem \ref{homogeneous duplication} 
	allows to construct homogeneous numerical semigroups (and, consequently, of homogeneous type), 
	but on the other hand it can be also  used to construct semigroups that are not homogeneous. 
	The following result gives an answer to \cite[Question 4.22]{JZ}.
	
	\begin{proposition}
Let $S$ be of homogeneous type. If there exists $s \in M$ 
and an odd integer $b \in S$ such that $2s+b \in \Ap_{m}(S)$, then $S \! \Join^{b} \! E$ 
is of homogeneous type and not homogeneous, where $E=\{s\}+S$. 
Moreover, $\gr_\fn(k[\![\du]\!])$ is not complete intersection, 
provided that $\gr_\fm(k[\![S]\!])$ is not complete intersection.
	\end{proposition}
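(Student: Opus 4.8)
The plan is to establish the three assertions in sequence, each building on results already proved in the paper. First I would show that $S \! \Join^{b} \! E$ is \emph{not} homogeneous by invoking Theorem \ref{homogeneous duplication}: since $E = \{s\}+S$ is principal, the hypothesis $2s+b \in \Ap_m(S)$ translates directly into the failure of the condition ``$\delta_i \notin 2E+b$ for all $i$''. Indeed, if $\delta_i = 2s+b$ is an element of the Ap\'ery set, then writing $E=\{s\}+S$ gives $\delta_i = 2s+b \in 2E+b$ (taking the two copies of $s$ as $e_{k_1}=e_{k_2}=s$). By Theorem \ref{homogeneous duplication}, the presence of such a $\delta_i$ forces $\du$ to be non-homogeneous, regardless of whether $S$ and $E$ themselves are homogeneous. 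This is the cleanest of the three steps.

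Next I would prove that $\du$ is nevertheless of \emph{homogeneous type}, which means $k[\![\du]\!]$ and its associated graded ring share the same Betti numbers. The natural route is to use the isomorphism of Corollary \ref{idealization}, $\gr_\fn(D)\cong \gr_\fm(R)\ltimes \gr_\fm(I)(-1)$, together with the structure of the idealization $D \cong R \ltimes I$ coming from the numerical duplication construction. Since $E$ is principal, $I=(t^s)$ is a principal ideal, hence free of rank one, so $I \cong R$ as an $R$-module and the idealization $R \ltimes I \cong R \ltimes R$ is an idealization by a free module. The key point is that for an idealization $A \ltimes A(-c)$ by a shifted copy of the ring, the Betti numbers are determined by those of $A$ in a controlled way (they essentially double via a mapping-cone / tensor construction), and this behavior is compatible with passing to the associated graded ring precisely because $\gr_\fm(I)$ is again free over $\gr_\fm(R)$ when $E$ is principal. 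By Corollary \ref{principal}, the Cohen-Macaulay property of $\gr_\fm(R)$ is inherited by $\gr_\fm(I)$ in the principal case, so the Betti-number comparison for $D$ reduces term by term to the Betti-number comparison for $R$, and the hypothesis that $S$ is of homogeneous type closes the argument.

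For the final assertion, that $\gr_\fn(k[\![\du]\!])$ is not a complete intersection provided $\gr_\fm(k[\![S]\!])$ is not, I would appeal directly to Proposition \ref{complete intersection}(2): it states that $\gr_\fn(D)$ is a complete intersection if and only if $\gr_\fm(R)$ is a complete intersection and $E$ is principal. Since $E=\{s\}+S$ is principal by construction, the second condition holds, so the complete-intersection property of $\gr_\fn(D)$ is equivalent to that of $\gr_\fm(R)$ alone. Hence if $\gr_\fm(k[\![S]\!])$ is not a complete intersection, neither is $\gr_\fn(k[\![\du]\!])$, and this is immediate.

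The main obstacle I anticipate is the second step: rigorously transferring the ``homogeneous type'' property from $S$ to $\du$. The subtlety is that homogeneous type is a statement about equality of \emph{all} Betti numbers of $k[\![\du]\!]$ and $\gr_\fn(k[\![\du]\!])$ as modules over a polynomial ring, and one must verify that the doubling of generators and relations induced by the idealization with a principal $I$ preserves these equalities simultaneously at every homological degree. I expect the cleanest way to handle this is to exploit that, for a principal $E$, the defining ideal of $\du$ and its initial ideal decompose compatibly under the idealization structure, so that a free resolution of $k[\![\du]\!]$ is obtained functorially (via a tensor product or mapping cone) from one of $k[\![S]\!]$, and the same construction applied to the associated graded side yields a resolution of $\gr_\fn(D)$; the hypothesis on $S$ then forces agreement of the Betti numbers level by level. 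Making this functoriality precise, rather than merely plausible, is where the real work lies.
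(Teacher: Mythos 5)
Your first and third steps are correct and coincide with the paper's own argument: non-homogeneity follows from Theorem \ref{homogeneous duplication}, because $2s+b\in\Ap_{m}(S)$ exhibits an Ap\'ery element of $S$ lying in $2E+b$ when $E=\{s\}+S$, and the complete intersection statement is immediate from Proposition \ref{complete intersection}(2) since $E$ is principal.

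The gap is in the second step, and it begins with a false identification: $k[\![\du]\!]$ is \emph{not} isomorphic to the Nagata idealization $R\ltimes I$. The duplication ring is a numerical semigroup ring, hence a domain, whereas $R\ltimes I$ has nonzero nilpotents (any element $(0,x)$ squares to zero); the two rings merely belong to the same family $R(I)_{v,u}$, and what Corollary \ref{idealization} provides is only an isomorphism of \emph{associated graded} rings, $\gr_\fn(D)\cong\gr_\fm(R)\ltimes\gr_\fm(I)(-1)$. Consequently your proposed doubling of resolutions can at best control $\beta_i(\gr_\fn(D))$; it gives no handle on $\beta_i(k[\![\du]\!])$ itself, which is the other half of what ``homogeneous type'' requires, and you concede yourself that this transfer is the unproved part. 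The paper closes this gap by a different observation: when $E=\{s\}+S$ is principal, $\du=2\cdot S+(2s+b)\,\mathbb N$ is a \emph{simple gluing} of $S$ with $\mathbb N$ (the single new generator $2s+b$ satisfies one extra binomial relation $y^2-x^{\mathbf a}$, where $2s+b=\sum_i a_i n_i$), and it then invokes \cite[Theorem 5.2]{St}, which transfers the homogeneous type property across simple gluings. If you wanted a self-contained argument in the spirit of your sketch, you would have to resolve $k[\![\du]\!]$ via this gluing structure (tensoring a minimal resolution of $k[\![S]\!]$ with the Koszul complex on the extra binomial) and prove the compatible decomposition of the initial ideal on the graded side --- in effect reproving Stamate's theorem --- rather than arguing through the idealization, which simply is not the ring $D$.
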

	\begin{proof}
		Since $E$ is principal, the numerical duplication coincides with a particular
		case of the so-called simple gluing and, then, \cite[Theorem 5.2]{St} 
		implies that  $\du$ is of homogeneous type, but it is not homogeneous by 
		Theorem \ref{homogeneous duplication}.
	In the case that $\gr_\fm(k[\![S]\!])$ is not complete intersection, 
	the result follows by Proposition \ref{complete intersection}.
	\end{proof}

\begin{examples} \label{example}
	{\bf 1.} Consider $S=\langle 6,7,10 \rangle$. 
	Its defining ideal is $(x_1x_2^2 - x_3^2, x_1^4 - x_2^2 x_3, x_2^4-x_1^3 x_3)$
	and the set of the minimal generators is also a standard basis. 
	Since $S$ has embedding dimension three, it is a numerical semigroup 
	of homogeous type by \cite[Theorem 4.5]{JZ}. 
	Its Ap\'ery set with respect to $6$ is $\Ap_6(S)=\{0,7,10,14,17,21\}$ and, 
	then, $3 \cdot 7 \in \Ap_6(S)$. Therefore, 
	the previous proposition implies that $T=S\! \Join^{7} \! \langle 7 \rangle=\langle 12,14,20,21 \rangle$ 
	is of homogeneous type, but $T$ is not homogeneous and its associated graded ring is not complete intersection. 
	In fact, $63=3 \cdot 21=3 \cdot 14+21 \in \Ap_{12}(T)$ and, according to Macaulay2 \cite{M2}, 
	the minimal free resolutions of $k[\![T]\!]$ and its associated ring are
	$$
	0 \rightarrow A^2 \rightarrow A^5 \rightarrow A^4 \rightarrow N \rightarrow 0, 
	$$
	where $A=k[\![x_1, \dots, x_4]\!]$ and $N$ is either $k[\![T]\!]$ or $\gr_\fn(k[\![T]\!])$. \\[1mm]
	{\bf 2.} In the previous example we use \cite[Theorem 4.5]{JZ}: if $S$ has 
	embedding dimension three and $\beta_1(k[\![S]\!])=\beta_1(\gr_\fm(k[\![S]\!]))$, 
	then $\beta_i(k[\![S]\!])=\beta_i(\gr_\fm(k[\![S]\!]))$ for all $i$, i.e. $S$ is of 
	homogeneous type. This is not true if $S$ has $4$ minimal generators; for instance 
	if $S=\langle 6,7,8,17 \rangle$, its defining ideal is generated by the set
	$\{x_2^2-x_1x_3,x_2x_3^2-x_1x_4,x_3^3-x_2x_4,x_1^4-x_2x_4,x_1^3x_2-x_3x_4, x_1^3x_3^2-x_4^2\}$ 
	that is also a standard basis, but $\gr_\fm(k[\![S]\!])$ is not Cohen-Macaulay.
\end{examples}


\begin{thebibliography}{99}
	
	\bibitem{AW} D.D. Anderson,  M. Winders, {\em Idealization of a module}, J. Commut. Algebra {\bf 1} (2009), no. 1, 3--56.
	
	
\bibitem{BDS} V. Barucci, M. D'Anna, F. Strazzanti, {\em A family of quotients of the Rees algebra}, Commun. Algebra {\bf 43} (2015), no. 1, 130--142.
	
	\bibitem{BDS2} V. Barucci, M. D'Anna, F. Strazzanti, {\em Families of Gorenstein and almost Gorenstein rings}, Ark. Mat. {\bf 54} (2016), no. 2, 321--338.
	
	\bibitem{BDF} V. Barucci, D.E. Dobbs, M. Fontana, Maximality properties in numerical semigroups and applications to one-dimensional analytically irreducible local domain, Mem. Amer. Math. Soc. {\bf 125}, no. 598, 1997.
	
	\bibitem{BF} V. Barucci, R. Fr\"{o}berg, {\em Associated graded rings of one dimensional analytically irreducible rings}, J. Algebra {\bf 304} (2006) n.1, 349--358.	
	
	\bibitem{B} L. Bryant, {\em Goto numbers of a numerical semigroup ring and the Gorensteiness of associated graded rings}, Commun. Algebra {\bf 38} (2010), no. 6, 2092--2128.
	
	\bibitem{D} M. D'Anna, {\em A construction of Gorenstein rings}, J. Algebra {\bf 306} (2006), no. 2, 507--519.	
	
	\bibitem{DF}
M. D'Anna, M. Fontana, {\em An amalgamated duplication of a ring along an ideal: basic properties}, J. Algebra Appl. {\bf 6} (2007), no. 3, 443--459.	
	
	\bibitem{DMS} M. D'Anna, V. Micale, A. Sammartano, {\em When the associated graded ring of a semigroup ring is Complete Intersection}, J. Pure Appl. Algebra {\bf 217} (2013), no. 6, 1007--1017.
	
	\bibitem{DS} M. D'Anna, F. Strazzanti, {\em The numerical duplication of a numerical semigroup}, Semigroup Forum {\bf 87} (2013), no. 1, 149--160.
	
	\bibitem{DGM} M. Delgado, P.A. Garc\'ia-S\'anchez, J. Morais, {\em ``NumericalSgps'' -- a GAP package}, Version 1.1.5 (2017) \href{http://www.gap-system.org/Packages/numericalsgps.html}{http://www.gap-system.org/Packages/numericalsgps.html}.	
	
	
	
	\bibitem{FGR} R.M. Fossum, 	P.A. Griffith, I. Reiten, Trivial extensions of abelian categories. Homological algebra of trivial extensions of abelian categories with applications to ring theory. Lecture Notes in Mathematics, Vol. 456. Springer-Verlag, Berlin-New York, 1975. 
	
	\bibitem{GAP} The GAP Group, {\em GAP -- Groups, Algorithms, and Programming}, Version 4.8.4 (2016). \href{http://www.gap-system.org}{http://www.gap-system.org}.
	
	\bibitem{G}
	A. Garc\'ia, {\em Cohen-Macaulayness of the associated graded of a semigroup ring}, Commun. Algebra {\bf 10} (1982), 393--415. 
	
	\bibitem{J} J. J\"ager, {\em L\"angenberechnung und kanonische ideale
		in eindimensionalen ringen}, Arch. Math. {\bf 29} (1997),
	504--512.
	
	\bibitem{M2} D.R. Grayson, M.E. Stillman, {\em Macaulay2, a software system for research in Algebraic Geometry}, available at \href{http://www.math.uiuc.edu/Macaulay2/}{http://www.math.uiuc.edu/Macaulay2/}.	
	
	
	\bibitem{HRV} J. Herzog, M.E. Rossi, G. Valla, {\em On the depth of the symmetric algebra}, Trans. Amer. Math. Soc. {296} (1986), no. 2, 577--606.	
	
	\bibitem{Huang-2015} 
	I-C. Huang, {\em Residual complex on the tangent cone of a numerical semigroup ring}, Acta Math. Vietnam. {\bf 40} (2015), no. 1, 149--160. 
	
	
	\bibitem{JZ} R. Jafari, S. Zarzuela Armengou, {\em Homogeneous numerical semigroups}, \href{https://arxiv.org/abs/1603.01078v1}{arXiv:1603.01078v1} (2016).
	
	\bibitem{L} J. Lipman, {\em Stable ideals and Arf rings}, Amer. J. Math. {\bf 93} (1971), 649--685.
	
	
	\bibitem{OST} A. Oneto, F. Strazzanti, G. Tamone, {\em One-dimensional Gorenstein local rings with decreasing Hilbert function}, J. Algebra {\bf 489} (2017), 91--114. 	

	\bibitem{Oo} A. Ooishi, {\em On the associated graded modules of canonical modules}, J. Algebra {\bf 141} (1991), 143--157.

		
	\bibitem{RG} J.C. Rosales, P.A. Garc\'ia-S\'anchez, Numerical Semigroups, Springer Developements in Mathematics, Vol 20, 2009.

	\bibitem{St} D.I. Stamate, {\em Betti numbers for numerical semigroup rings}, to appear in Multigraded Algebra and Applications (V. Ene, E. Miller, Eds.) Springer Proceedings in Mathematics \& Statistics, \href{https://arxiv.org/abs/1801.00153}{arXiv:1801.00153v1}.

	
	
\end{thebibliography}
\end{document}